\newtheorem{thm}{Theorem}[section]
\newtheorem{prop}[thm]{Proposition}
\newtheorem{lem}[thm]{Lemma}
\newtheorem{cor}[thm]{Corollary}
\theoremstyle{definition}
\newtheorem{remark}[thm]{Remark}
\numberwithin{equation}{section}
\newcommand{\Aut}{{\mathsf{Aut}}}
\newcommand{\D}{{\mathsf{D}}}
\newcommand{\M}{{\mathsf{M}}}
\newcommand{\OO}{\mathsf{O}}
\newcommand{\DAut}[1]{{\D\Aut(#1)}{}}        
\newcommand{\DAutt}[2]{\D\Aut #1(#2){}}      
\newcommand{\KAut}[1]{{\OO(K(#1))}}
\newcommand{\KAutt}[1]{{\OO_{\FM}(K(#1))}}
\newcommand{\Pic}{{\rm Pic}}
\newcommand{\End}{{\rm End}}
\newcommand{\Hom}{{\rm Hom}}
\newcommand{\Ext}{{\rm Ext}}
\newcommand{\sod}[1]{{\langle #1 \rangle}}   
\newcommand{\embed}{\,\hookrightarrow}
\newcommand{\isom}{ \text{{\hspace{0.48em}\raisebox{0.8ex}{${\scriptscriptstyle\sim}$}}}
                    \hspace{-0.65em}{\rightarrow}\hspace{0.3em}}
\newcommand{\coloneq}{\mathrel{\mathop:}=}
\newcommand{\eqcolon}{=\mathrel{\mathop:}}
   \newcommand{\ko}{{\mathcal O}}
\newcommand{\kp}{{\mathcal P}}
 \newcommand{\IC}{\mathbb{C}}  
   \newcommand{\IP}{\mathbb{P}}
\newcommand{\IQ}{\mathbb{Q}}  \newcommand{\IZ}{\mathbb{Z}}
\DeclareMathOperator{\cone}{cone}
\newcommand{\cat}[1]{\begin{bf}#1\end{bf}}
\newcommand{\coh}{{\cat{Coh}}}
\newcommand{\FM}{\mathsf{FM}}
\newcommand{\TT}{\mathsf{T}\!}
\newcommand{\FFF}{\mathsf{F}\hspace{-2pt}}
\newcommand{\PP}{\mathsf{P}}
\newcommand{\PPP}{\mathsf{P}\!}
\newcommand{\eq}{\text{eq}}
\newcommand{\id}{{\rm id}}
\newcommand{\tr}{{\rm tr}}
\newcommand{\dual}{^{\vee}}
\newcommand{\orth}{^\perp}
\newcommand{\kk}{\mathbf{k}}
\newcommand{\Sn}{{S_n}}
\newcommand{\Xn}{{X^{[n]}}}
\newcommand{\tX}{{\tilde X}}
\newcommand{\tZ}{{\tilde Z}}
\newcommand{\tE}{{\tilde E}}
\newcommand{\tF}{{\tilde F}}
\newcommand{\tPhi}{{\tilde \Phi}}
\newcommand{\deltan}{{\delta_{[n]}}}
\begin{document}

\title[New autoequivalences]{On autoequivalences of some Calabi--Yau and hyperk\"ahler varieties}
\author[Ploog]{David Ploog}
\address{David Ploog, Leibniz Universit\"at Hannover\\
Institut f\"ur Algebraische Geometrie\\
Welfengarten 1\\
30167 Hannover, Germany}
\email{ploog@math.uni-hannover.de}

\author[Sosna]{Pawel Sosna$^1$}
\address{Pawel Sosna, Fachbereich Mathematik der Universit\"at Hamburg\\
Bundesstra\ss e 55\\
20146 Hamburg, Germany}
\email{pawel.sosna@math.uni-hamburg.de}

\thanks{$^1$ Supported by the RTG 1670 of the  DFG (German Research Foundation)}

\begin{abstract}
We show how one can construct new autoequivalences of some Calabi--Yau and hyperk\"ahler varieties by passing from a smooth projective surface to the associated Hilbert scheme of points.
\end{abstract}
\maketitle

\setcounter{tocdepth}{1}
\tableofcontents

\section{Introduction}

The bounded derived category of coherent sheaves on a smooth projective variety $Z$ has attracted a lot of attention in the last decade, partly because of its relation to symplectic geometry via the Homological Mirror Symmetry conjecture \cite{Kontsevich}. In particular, it is natural to study its group of autoequivalences. By classical results of Bondal and Orlov \cite{Bondal-Orlov}, this group is completely understood if the canonical bundle or its dual is ample: the group is then generated by line bundle twists, automorphisms of the variety and the shift. On the other hand, varieties with trivial canonical bundle always have autoequivalences beyond the above mentioned standard ones. For Calabi--Yau varieties, there are spherical twists \cite{Seidel-Thomas}, and for hyperk\"ahler varieties, there are $\IP^n$-twists \cite{Huybrechts-Thomas}. Both of these tear apart the standard heart of the derived category. In general, there are few general constructions of \emph{exotic} autoequivalences, that is, non-standard autoequivalences which are neither spherical nor $\IP^n$-twists, but see, for instance, Anno's spherical functors \cite{Anno} and Addington's examples \cite{Addington}.

In this article, we present a way to turn autoequivalences on K3 and Enriques surfaces into interesting autoequivalences of varieties with trivial canonical bundle, namely Hilbert schemes of points on K3 surfaces and Calabi-Yau varieties made from Enriques surfaces. Our main result can be roughly stated as follows, see Theorems \ref{thm:enriques} and \ref{thm:hilb} for details.
\smallskip

\noindent
{\textbf{Main result.}} \emph{Let $Z$ be either the Hilbert scheme of $n$ points on a smooth projective K3 surface or the Calabi--Yau variety which is the canonical cover of the Hilbert scheme of $n$ points on an Enriques surface. Then $Z$ admits exotic autoequivalences.}

\smallskip
The two constructions share the following technical tools: we use spherical twists on a K3 surface, linearisations for finite group actions and the induction construction from \cite{Ploog}. Regarding the latter, we provide a much simpler description of the induced functor in Proposition~\ref{prop:induction}, and we say something about the image on the level of K-groups in Proposition~\ref{prop:K}.

It seems to be a difficult and open problem to decide whether a given autoequivalence is in the subgroup generated by the standard autoequivalences and all spherical or $\IP^n$-twists, respectively. In fact, we are unable to check this for our new autoequivalences.
\smallskip 

\noindent
\textbf{Acknowledgements.} We thank Daniel Huybrechts, Eyal Markman and Richard Thomas for encouragement and support.


\section{Preliminaries}

In this article, all varieties will be smooth, projective, and defined over an
algebraically closed field $\kk$ of characteristic 0. We will denote by $\D(X)$
the bounded derived category of coherent sheaves of a variety $X$, and we
write $\DAut{X}$ for the group of autoequivalences of $\D(X)$.
This is done for brevity, but also to emphasise that we consider an
autoequivalence of $\D(X)$ as a `higher (or derived) symmetry' of $X$.

All categories, and functors between them, are assumed to be $\kk$-linear.
Functors between triangulated categories are assumed to respect the triangulated
structures. We will write distinguished triangles sloppily as $A\to B\to C$,
suppressing the connecting morphism $C\to A[1]$.

We use the same symbol for functors between abelian categories, and their derived functors. For example, we will write $f_*\colon\D(X)\to\D(Y)$ for the derived push-forward functor of a proper map $f\colon X\to Y$. Given a line bundle $L$, we will write $\M_L$ for the autoequivalence given by tensoring with $L$.

\subsection{The varieties}\label{sub:the varieties}
Before we introduce the varieties performing in this article, we have to recall
the notion of canonical covers. Let $Z$ be a variety with torsion canonical
bundle of (minimal) order $k$. The \emph{canonical cover} $\tZ$ of $Z$ is the
unique (up to isomorphism) variety with trivial canonical bundle and an \'etale
morphism $\pi\colon \tZ \to Z$ of degree $k$ such that
$\pi_*\ko_\tZ=\bigoplus_{i=0}^{k-1}\omega_Z^i$. In this case, there is a free
action of the cyclic group $\IZ/k\IZ$ on $\tZ$ such that $\pi$ is the quotient
morphism.

\begin{enumerate}
\item A \emph{K3 surface} is defined by having trivial canonical bundle and no algebraic 1-forms. K3 surfaces have a rich group of autoequivalences, see, for example, \cite{HMS} or \cite{Bri2}.
\item An \emph{Enriques surface} is by definition the quotient of a K3 surface by a fixed point free involution. It follows that the canonical bundle has order two and that the canonical cover of an Enriques surface is a K3 surface with a free involution.
\item Given a K3 surface $X$, the \emph{Hilbert scheme} $\Xn$ (of $n$ points) is a prime example for an irreducible symplectic (also called hyperk\"ahler) variety, i.e.\ there is a no-where vanishing 2-form, unique up to constants.
\item Given an Enriques surface $Y$, the Hilbert scheme $Y^{[n]}$ has order two canonical bundle. The canonical cover of $Y^{[n]}$ is a Calabi--Yau variety, that is, its canonical bundle is trivial and its structure sheaf has no cohomology apart from lowest and top degrees, see \cite{Oguiso-Schroer} or \cite[Prop.\ 1.6]{Nieper}.
\end{enumerate}

\subsection{Varieties with group actions} \label{sub:linearisations}
Let $X$ be a variety on which a finite group $G$ acts. A natural object to study in this setting is the quotient stack $[X/G]$. If $G$ acts freely, then $[X/G]$ coincides with the variety quotient. However, if the action has fixed points, then the quotient stack is better behaved than the quotient variety. For example, $[X/G]$ is always a smooth Deligne--Mumford stack, making it as good as smooth, projective varieties from the point of view of derived categories.

The category of coherent sheaves on $[X/G]$ has a nice and well-known description using linearised sheaves. We recall some aspects of this theory. A coherent sheaf $A$ is called \emph{invariant} if $g_*A\cong A$ for all $g\in G$. This is not a very good notion, however, as the full subcategory of invariant sheaves is not abelian. It is better to make the choice of isomorphisms part of the data: a $G$-linearisation on $A$ is given by a collection of isomorphisms $\alpha_g\colon A\isom g_*A$ for all $g\in G$ such that $\alpha_1=\id_A$ and the natural cocycle condition holds. The \emph{equivariant category} $\coh^G(X)$ has as objects linearised sheaves, $(A,\alpha)$; the morphisms are given by
 $\Hom_{\coh^G(X)}((A,\alpha),(B,\beta))\coloneq\Hom_{\coh(X)}(A,B)^G$
where the linearisations $\alpha$ and $\beta$ turn $\Hom_X(A,B)$ into a $G$-representation in the natural way. The category $\coh^G(X)$ is abelian (see \cite{Groth}). Moreover, there is a tautological equivalence of abelian categories $\coh^G(X)=\coh([X/G])$ and, in turn, an equivalence of triangulated categories $\D(\coh^G(X))=:\D^G(X)=\D([X/G])$. As a remark on notation, we will generally write $\D^G(X)$ instead of $\D([X/G])$ but $\DAut{[X/G]}$ instead of $\Aut(\D^G(X))$.

Obviously, every linearised sheaf (or object) is invariant. The reverse question is subtle: an invariant object may have none, a unique, or many different linearisations. The situation is easier for invariant objects $A$ which are simple, i.e.\ $\Hom(A,A)=\kk$: such an $A$ has a group cohomology class $[A]\in H^2(G;\kk^*)$ where $G$ acts trivially on $\kk^*$. If $[A]=0$ in $H^2(G;\kk^*)$, then $A$ is linearisable, and the set of $G$-linearisations is a torsor over $H^1(G;\kk^*)=\Hom(G,\kk^*)$ (for these facts see \cite[Lemma~1]{Ploog}). The group cohomology $H^2(G;\kk^*)$ is called the \emph{Schur multiplier}, it is a finite abelian group. (This is a place where we need $\kk$ to be algebraically closed and of characteristic 0.) We will only have need for cyclic and symmetric groups. For these, we have
\begin{align*}
H^2(\IZ/n\IZ;\kk^*) &= 0, &\qquad
H^2(\Sn;\kk^*) &= \begin{cases} 0 & n=2,3,\\ \IZ/2\IZ & n\geq4 \end{cases} \\
H^1(\IZ/n\IZ;\kk^*) &= \IZ/n\IZ, &\qquad
H^1(\Sn;\kk^*) &= \IZ/2\IZ .
\end{align*}

\subsection{Autoequivalences of Hilbert schemes} \label{sub:Hilbert}

Let $X$ be a surface. There is a canonical and faithful method of turning autoequivalences for $X$ into autoequivalences for the Hilbert scheme $\Xn$:
\[ \DAut{X} \embed \DAut{\Xn} .\]

We proceed to explain the method. First, by Haiman's well-known result \cite{Haiman}, the Hilbert scheme $\Xn$ is isomorphic to the $\Sn$-equivariant Hilbert scheme $\Sn\text{-Hilb}(X^n)$, where $\Sn$ acts on $X^n$ by permuting the factors. This induces an equivalence of abelian categories $\coh(\Xn)\cong\coh(\Sn\text{-Hilb}(X^n))$ and in turn an equivalence of triangulated categories $\D(\Xn)\cong\D(\Sn\text{-Hilb}(X^n))$.

Second, we employ the derived McKay correspondence of \cite{BKR}, which gives an equivalence $\Phi\colon\D(\Sn\text{-Hilb}(X^n))\isom\D^\Sn(X^n)$. We will only apply these results for K3 surfaces $X$, so this is a genuinely symplectic situation. However, the BKR equivalence actually holds for arbitrary $X$ (smooth and projective, as always). Altogether, we obtain an equivalence $\D(\Xn)\cong\D^\Sn(X^n)$ which will allow us to bring equivariant methods to bear on the problem.

The standard calculus of Fourier--Mukai kernels extends to the equivariant
setting: given two varieties $Z_1$ and $Z_2$ with finite group actions by $G_1$
and $G_2$, respectively, then objects in $\D^{G_1\times G_2}(Z_1\times Z_2)$
give rise to functors $\D^{G_1}(Z_1)\to\D^{G_2}(Z_2)$ with the additional
change that the projection $\pi_2\colon Z_1\times Z_2\to Z_2$ is $G_1\times
G_2$-equivariant with $G_1$ acting trivially on $Z_2$, and one has to take
$G_1$-invariants after the push-forward $\pi_{2*}$ (see \cite{BKR}). Kawamata's
\cite{Kaw} extends Orlov's famous result that all equivalences are of this type.

We restrict to the case of $G_1=G_2\eqcolon G$. Here, $G\times G$-linearised
kernels in $\D(Z_1\times Z_2)$ can be obtained from diagonally linearised
kernels, see \cite{Ploog}: let $P\in\D^{\Delta G}(Z_1\times Z_2)$ where
$\Delta G$ acts on $Z_1\times Z_2$ via $\Delta\colon G\embed G\times G$,
$g\mapsto(g,g)$. Then the object $G\cdot P\coloneq \bigoplus_{g\in G} (1,g)_*P$
is canonically $G\times G$-linearised (and isomorphic to $\bigoplus_{g\in G}
(g,1)_*P$). The crucial point of this construction is that it respects
equivalences: if $\FM_P\colon\D(Z_1)\isom\D(Z_2)$ is an equivalence of the
ordinary categories, then $\FM_{G\cdot P}\colon\D^G(Z_1)\isom\D^G(Z_2)$ is an
equivalence of the equivariant categories. Furthermore, in the special case of
$Z_1=Z_2\eqcolon Z$, we get a group homomorphism
\[ \DAutt{^{\Delta G}}{Z} \to \DAut{[Z/G]} \]
where $\DAutt{^{\Delta G}}{Z}$ is the set of autoequivalences of $\D(Z)$ whose
Fourier--Mukai kernel is equipped with a $\Delta G$-linearisation. Autoequivalences in
the image of this homomorphism are called \emph{induced autoequivalences}.
However, there is a simpler description of induced functors --- these are the
original Fourier--Mukai transform on the underlying complexes of sheaves:

\begin{prop} \label{prop:induction}
Let $Z_1$ and $Z_2$ be two smooth, projective varieties with actions by a finite group $G$ and let $(P,\mu)\in\D^{\Delta G}(Z_1\times Z_2)$. Then the induced functor
 $\FM_{G\cdot P}\colon\D^G(Z_1)\to\D^G(Z_2)$
is isomorphic to the functor given by
\[ 
   (A,\alpha) \mapsto (\pi_{2*}(P\otimes\pi_1^*A)),
\pi_{2*}(\mu \otimes \pi_1^*\alpha))
    = (\FM_P(A), \pi_{2*}(\mu \otimes \pi_1^*\alpha)) .\]
\end{prop}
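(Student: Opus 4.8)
The plan is to unwind the definition of the equivariant Fourier--Mukai functor $\FM_{G\cdot P}$ and to simplify it by repeated use of the projection formula together with the equivariance of the two projections. By definition, the underlying object of $\FM_{G\cdot P}(A,\alpha)$ is $\bigl(\pi_{2*}((G\cdot P)\otimes\pi_1^*A)\bigr)^{G_1}$, where one takes invariants for the copy $G_1$ of $G$ that acts trivially on $Z_2$. So the statement splits into two tasks: identifying this underlying object with $\FM_P(A)$, and identifying the residual $G_2$-linearisation with $\pi_{2*}(\mu\otimes\pi_1^*\alpha)$.

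First I would compute the object before taking invariants. Writing $G\cdot P=\bigoplus_{g\in G}(1,g)_*P$ and using $\pi_1\circ(1,g)=\pi_1$, the projection formula gives $(1,g)_*P\otimes\pi_1^*A=(1,g)_*(P\otimes\pi_1^*A)$. Since $\pi_2\circ(1,g)=g\circ\pi_2$, applying $\pi_{2*}$ turns the $g$-th summand into $g_*\FM_P(A)$, so that $\pi_{2*}((G\cdot P)\otimes\pi_1^*A)=\bigoplus_{g\in G}g_*F$ with $F\coloneq\FM_P(A)$. Thus, up to linearisation, the output is a direct sum of the $G$-translates of $F$ indexed by $G$ itself.

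The next step is to take $G_1$-invariants. Tracing the $G\times G$-linearisation of $G\cdot P$ through the construction, one sees that $G_1$ acts on $\bigoplus_{g}g_*F$ by permuting the summands simply transitively (sending the summand indexed by $g$ to the one indexed by $h^{-1}g$ for $(h,1)\in G_1$), combined with the isomorphisms coming from $\mu$ and $\pi_1^*\alpha$. Because this permutation is simply transitive, the averaging idempotent $\frac{1}{|G|}\sum_{h\in G}\Lambda_{(h,1)}$ for the $G_1$-linearisation, restricted to the $g=1$ summand, gives an isomorphism $F\isom\bigl(\bigoplus_g g_*F\bigr)^{G_1}$; this is the object-level part of the claim.

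Finally I would transport the $G_2$-linearisation across this isomorphism. The group $G_2$ also permutes the summands simply transitively and is twisted only by $\mu$ (the pulled-back linearisation $\pi_1^*\alpha$ is trivial for $G_2$); since $G_1$ and $G_2$ commute, this action descends to the invariants. Pushing it through the isomorphism above --- which itself encodes $\pi_1^*\alpha$ via the invariants projection --- merges the $\mu$-twist and the $\alpha$-twist into a single collection of isomorphisms $F\isom k_*F$, and the claim is that this collection is exactly $\pi_{2*}(\mu\otimes\pi_1^*\alpha)$. I expect this last bookkeeping to be the main obstacle: the object-level steps are routine, but verifying that the two twists assemble into the stated formula --- and that the result is a genuine $G$-linearisation, i.e.\ satisfies the cocycle condition, and is natural in $(A,\alpha)$ --- is where the real content lies.
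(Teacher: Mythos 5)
Your proposal is correct and follows essentially the same route as the paper's proof: identify $\pi_{2*}(G\cdot P\otimes\pi_1^*A)$ as a sum of copies of $\FM_P(A)$ permuted simply transitively by $G\times 1$, identify the $G\times1$-invariants with a single copy (your averaging idempotent restricted to one summand is, up to the scalar $1/|G|$, exactly the paper's diagonal map), and then transport the residual linearisation to obtain $\pi_{2*}(\mu\otimes\pi_1^*\alpha)$. The paper is just as brief on the final linearisation bookkeeping (it asserts in one sentence that the two $G$-linearisations coincide), so your deferral of that verification, and the minor left/right indexing conventions you leave loose, do not constitute a gap relative to the paper's own argument.
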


\begin{proof}
The essential point is to determine the $G\times1$-invariants of
$\pi_{2*}(G\cdot P\otimes\pi_1^*A)$. By definition, $(g,1)$ acts on $G\cdot P$
by a permutation of summands which we denote $\sigma_g$, and on $\pi_1^*A$ as
$\pi_1^*\alpha_g$. Therefore, $(g,1)$ is the automorphism
$\pi_{2*}(\sigma_g\otimes\pi_1^*\alpha_g)$ of the object $\pi_{2*}(G\cdot
P\otimes\pi_1^*A)$. The diagonal map
$\pi_{2*}(P\otimes\pi_1^*A)\to\pi_{2*}(G\cdot P\otimes\pi_1^*A)$ induced on
each summand by the canonical isomorphism
 $\pi_{2*}(P\otimes\pi_1^*A) \isom \pi_{2*}((g,1)_*P\otimes\pi_1^*A)$
is an isomorphism
$\pi_{2*}(P\otimes\pi_1^*A)\isom(\pi_{2*}(G\cdot P\otimes\pi_1^*A))^{G\times
1}$. The two $G$-linearisations on this object coincide.
\end{proof}

Starting with a surface $X$, we observe that an autoequivalence
$\varphi\in\DAut{X}$ brings about another one on the $n$-fold product $X^n$,
viz.\ $\varphi^n\in\DAut{X^n}$. On the level of Fourier--Mukai kernels, if
$\varphi=\FM_P$ with $P\in\D(X\times X)$, then $\varphi^n=\FM_{P^{\boxtimes
n}}$, where $P^{\boxtimes n}\in\D(X^n\times X^n)$ is the $n$-fold exterior
tensor product. This kernel is diagonally $\Sn$-linearised in a canonical
fashion, although the linearisation is not unique (see
Section~\ref{sub:linearisations}). Combining this association with the one
from the previous paragraph, we obtain two group homomorphisms
\begin{equation}\label{liftingtohilb}
\deltan\colon \DAut{X} \to \DAutt{^{\Delta\Sn}}{X^n} \to \DAut{[X^n\!\!\,/\!\Sn]} = \DAut{\Xn}
\end{equation}
whose composition, in the following denoted by $\varphi\mapsto\varphi^{[n]}$,
is shown to be injective in \cite{Ploog}.

In the rest of this paragraph, we study the effect of this construction on 
the level of K-groups. An autoequivalence $\varphi$ of $\D(X)$ induces an isometry
of $K(X)$, denoted $\varphi^K$ (using the Euler form as a pairing, in general 
degenerate and not symmetric). However, it is not clear that
 $\deltan\colon\DAut{X}\to\DAut{\Xn}$
induces a map $\KAut{X}\to\KAut{\Xn}$. Another issue is that many non-trivial
autoequivalences of $\D(X)$ induce the identity map on K-groups. For example,
the square of a spherical twist or any $\IP^n$-twist (see \ref{sub:single}) acts
trivially on the K-group. Both of these issues are solved by restricting focus
to the image of $\DAut{X}\to\KAut{X}$, which we denote $\KAutt{X}$.

\begin{prop} \label{prop:K}
Let $X$ be a surface with $\chi(\ko_X)\neq 0$ and torsion free K-group.
Then the mapping
 $\deltan^K\colon\KAutt{X}\to\KAut{\Xn}$, $\varphi^K\mapsto (\deltan(\varphi))^K$
is well-defined and injective:
\[\xymatrix{      & \DAut{X}  \ar@{^{(}->}[r]^\deltan    \ar@{->>}[d]  & \DAut{\Xn} \ar[d] \\
         \KAut{X} & \KAutt{X} \ar@{^{(}->}[r]^{\deltan^K} \ar@{_{(}-}[l] & \KAut{\Xn}        .}\]
\end{prop}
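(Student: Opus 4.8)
The plan is to prove the single statement
$$\deltan(\varphi)^K=\id\iff\varphi^K=\id,$$
since this says precisely that the kernel of $\DAut{X}\to\KAut{\Xn}$ coincides with the kernel of $\DAut{X}\to\KAut{X}$: the implication ``$\Leftarrow$'' is well-definedness of $\deltan^K$ on $\KAutt{X}$, and ``$\Rightarrow$'' is its injectivity. (For $n=1$ everything is trivial, so assume $n\geq2$.) Write $v\coloneq\varphi^K$ and let $\rho\colon K(\Xn)=K^{\Sn}(X^n)\to K(X^n)=K(X)^{\otimes n}$ be the forgetful map (the Künneth identification uses torsion-freeness), with left adjoint the induction $\iota\colon E\mapsto\bigoplus_{\sigma}\sigma_*E$ carrying the regular linearisation. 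By Proposition~\ref{prop:induction} the functor $\deltan(\varphi)$ acts on underlying objects as $\varphi^n$, so on $K$-theory $\rho\circ\deltan(\varphi)^K=v^{\otimes n}\circ\rho$. Finally, since $\rho\circ\iota=\sum_\sigma\sigma_*$ equals $n!$ on invariants, rationally $\im(\rho)=(K(X)^{\otimes n})^{\Sn}=\operatorname{Sym}^n(K(X)\otimes\IQ)$.

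For well-definedness, suppose $\varphi_1^K=\varphi_2^K=v$ with kernels $P_1,P_2$. The transform $\deltan(\varphi)^K$ is the equivariant Fourier--Mukai transform along $\Sn\cdot P^{\boxtimes n}$, and the class of this kernel is the value on $[P]$ of a natural $K$-theoretic external power operation (the canonical linearisation of $P^{\boxtimes n}$ is the permutation action on the external power), hence depends only on $[P]\in K(X\times X)$. So it suffices that $v$ determines $[P]$. This is the standard fact that a Fourier--Mukai kernel is recovered from its $K$-theoretic action once the relevant pairing is non-degenerate: from $v(a)=\sum_i\chi(X,a\cdot b_i)\,c_i$ one reads off $[P]=\sum_ib_i\boxtimes c_i$ provided $(a,b)\mapsto\chi(X,a\cdot b)$ is non-degenerate, which holds because $\chi(\ko_X,\ko_X)=\chi(\ko_X)\neq0$ together with Poincar\'e duality on $H^{\mathrm{even}}(X)$ keeps the block-anti-triangular Euler pairing non-degenerate. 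Torsion-freeness then promotes the rational identity $[P_1]=[P_2]$ to an integral one, so the two power operations, and hence $\deltan(\varphi_1)^K=\deltan(\varphi_2)^K$, agree.

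For injectivity, assume $\deltan(\varphi)^K=\id$. Then $v^{\otimes n}$ fixes $\im(\rho)$, i.e.\ $\operatorname{Sym}^n(v)=\id$ on $\operatorname{Sym}^n(K(X)\otimes\IQ)$. Comparing eigenvalues of $\operatorname{Sym}^n(v)$ gives $\lambda_i^n=1$ and $\lambda_i^{n-1}\lambda_j=1$ for all $i,j$, forcing $\lambda_i=\lambda_j$; as $\operatorname{Sym}^n(v)=\id$ is semisimple there is no nilpotent part, so $v=\lambda\cdot\id$ with $\lambda^n=1$. Since $v$ is an Euler-form isometry and $\chi(\ko_X,\ko_X)=\chi(\ko_X)\neq0$, applying the isometry condition to $([\ko_X],[\ko_X])$ yields $\lambda^2=1$, hence $\lambda=\pm1$; for $n$ odd this already gives $v=\id$. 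For $n$ even I rule out $\lambda=-1$: the shift $[1]$ satisfies $[1]^K=-\id$, so by well-definedness $\deltan(\varphi)^K=\deltan([1])^K$. But the canonical linearisation of $(\ko_\Delta[1])^{\boxtimes n}=\ko_{\Delta^n}[n]$ acquires the Koszul sign $\operatorname{sgn}(\sigma)$, so $\deltan([1])=[n]\circ\M_{L}$ with $L$ the line bundle on $\Xn$ attached to the sign character; since $[n]^K=(-1)^n\id=\id$ while $L$ is non-trivial (its first Chern class is a nonzero multiple of the Hilbert--Chow boundary), $\deltan([1])^K$ is multiplication by $[L]\neq1$, contradicting $\deltan(\varphi)^K=\id$. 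Thus $\lambda=1$ and $v=\id$.

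The hard part will be the careful bookkeeping of the Koszul signs in the canonical $\Delta\Sn$-linearisation of $P^{\boxtimes n}$, together with the non-triviality of the sign line bundle $L$ in $K(\Xn)$: it is exactly these signs that obstruct the naive conclusion $\deltan([1])^K=\id$ and that make the hypothesis $\chi(\ko_X)\neq0$ (through the non-vanishing self-pairing $\chi(\ko_X,\ko_X)$) genuinely necessary to pin down the scalar $\lambda$.
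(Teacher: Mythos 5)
Your reduction of the statement to ``$\deltan(\varphi)^K=\id\iff\varphi^K=\id$'' is legitimate, but both halves of your argument rest on claims that fail for the very surfaces the proposition is aimed at. The fatal one is in the well-definedness step: the Euler pairing is \emph{not} non-degenerate on the full Grothendieck group of a K3 surface. Concretely, take points $x\neq y$ on a K3 surface $X$ with $[x]\neq[y]$ in $\mathrm{CH}_0(X)$ (these exist by Mumford's infinite-dimensionality theorem); then $a:=[\ko_x]-[\ko_y]\neq 0$ in $K(X)$ (the Chern character is injective, cf.\ the remark following the proposition and \cite{Roitman}), yet $\chi(b,a)=\chi(a,b)=\rank(b)\cdot\deg([x]-[y])=0$ for every $b$, so $a$ lies in the radical. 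Non-degeneracy holds only on the \emph{numerical} Grothendieck group, whereas the proposition is deliberately about the honest $K$-group --- that is exactly why torsion-freeness is a hypothesis. Consequently ``$v$ determines $[P]$'' is unjustified; indeed the map $K(X\times X)\to\End(K(X))$, $[P]\mapsto v_P$, kills all classes of the form $\pi_1^*a$ with $a$ as above, so it is very far from injective. Since your treatment of even $n$ in the injectivity half invokes this well-definedness (to replace $\varphi$ by the shift), the collapse is total, not confined to one direction. Two further claims are asserted without proof: that $[P]\mapsto[\Sn\cdot P^{\boxtimes n}]$ is well defined on K-classes (a genuine power-operation statement, with Koszul-sign subtleties for complexes), and that the sign-character twist corresponds under BKR to a line-bundle twist $\M_L$ --- the paper's own remark warns that the BKR equivalence does not respect tensor products, so this needs a real argument.

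The injectivity half has an independent gap: torsion-freeness does \emph{not} give a K\"unneth isomorphism $K(X^n)\cong K(X)^{\otimes n}$, even rationally; that is topological K-theory intuition, while for Grothendieck groups of varieties only \emph{surjectivity} of the K\"unneth map is available (via generation of $\D(X^n)$ by box products, cf.\ \cite{Kuznetsov}). So you may not identify $\im(\rho)_\IQ$ with $\operatorname{Sym}^n(K(X)\otimes\IQ)$, and your eigenvector argument needs precisely the unprovided injectivity ``$\mathrm{sym}(x^{\boxtimes n})=\mathrm{sym}(y^{\boxtimes n})$ in $K(X^n)$ implies $y=\zeta x$''. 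The paper's proof avoids every one of these obstacles. For well-definedness it uses that $K(\D^{\Sn}(X^n))$ is generated by classes $[F\boxtimes F]$ and $[F\boxtimes F'\oplus F'\boxtimes F]$ (Kuznetsov-type generation for the quotient stack), on which the action of $\deltan(\varphi)$ is computed by Proposition~\ref{prop:induction} and manifestly depends only on $\varphi^K$ --- no recovery of the kernel class is ever attempted. For injectivity it uses the test objects $s(F)=p_1^*F\oplus\cdots\oplus p_n^*F$ with the permutation linearisation and the transfer $\pi=p_{1*}\circ\mathrm{forget}$: the projection formula gives $[\pi s(F)]=e[F]+(n-1)\chi(F)[\ko_X]$ with $e=\chi(\ko_X)$, and the elementary fact that $a\mapsto ea+(n-1)\chi(a)[\ko_X]$ is injective on a torsion-free group when $e\neq0$ then makes $s^K$ (and its variant $(s')^K$) injective, so that $\varphi^K\neq\id$ forces $\deltan(\varphi)^K[s(F)]=[s'(\varphi(F))]\neq[s(F)]$. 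This is where $\chi(\ko_X)\neq0$ and torsion-freeness actually enter --- through pushforward to $X$, not through any non-degeneracy of the Euler form, K\"unneth decomposition, or sign bookkeeping.
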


\begin{proof}
Let $\varphi_1,\varphi_2\in\DAut{X}$ such that $\varphi_1^K=\varphi_2^K$. We 
want to show that $(\deltan(\varphi_1))^K=(\deltan(\varphi_2))^K$. Only for
notational reasons, we will deal with $n=2$. Now $K(X\times X)$ is generated
by classes of the form $F\boxtimes F'$, compare the proof of \cite[Lem.\ 5.2]{Kuznetsov},
and $K(\D^{S_2}(X\times X))$ is generated by classes of the forms $F\boxtimes F$
and $F\boxtimes F'\oplus F'\boxtimes F$ by the same reasoning for the quotient stack. The first statement implies
 $[\varphi_1^{\times 2}(F\boxtimes F')] = [\varphi_1(F)\boxtimes\varphi_1(F')] =
  [\varphi_1(F)]\boxdot[\varphi_1(F')] = [\varphi_2(F)]\boxdot[\varphi_2(F')]$
and the second allows similar reasoning, invoking Proposition~\ref{prop:induction}.

We consider the following functors:
\begin{align*}
 s  \colon & \D(X) \to \D^{S_n}(X^n), \quad F \mapsto p_1^*F\oplus\cdots\oplus p_n^*F \\
\pi \colon & \D^{S_n}(X^n) \xrightarrow{\text{forget}} \D(X^n) \xrightarrow{p_{1*}} \D(X)
\end{align*}
($s(F)$ has a natural $S_n$-linearisation from permutation of direct summands.)

By the projection formula,
 $\pi s = \id \otimes H^\bullet(\ko_X) \oplus H^\bullet(\cdot) \otimes \ko_X^{n-1}$. 
This implies that $s^K$ is injective: let $F\in\D(X)$ with $[s(F)]=0$. Then also
$[\pi s(F)]=0$ and in thus
\[ 0 = [\pi s(F)] = [F^{\oplus e} \oplus \ko_X^{\oplus(n-1)\chi(F)}]
                  = e[F] + (n-1)\chi(F)[\ko_X] \]
where $e\coloneq\chi(\ko_X)\neq0$ by assumption. Now it is a general algebraic fact 
that the endomorphism 
 $K(X) \to K(X)$, $[F] \mapsto e[F] + (n-1)\chi(F)[\ko_X]$
is injective, which in turn forces $s^K$ to be injective:

Let $A$ be a torsion-free abelian group, $\chi\colon A\to\IZ$ a homomorphism
and $a_0\in A$ a fixed element. Denote by $\chi_\IQ\colon A\otimes \IQ\to \IQ$.
Put $e\coloneq\chi(a_0)$ and consider the endomorphism
$\mu\colon A\to A$, $a\mapsto e a+(n-1)\chi(a)a_0$, where $n>0$ is a natural number.
Then $\mu(a)=0$ implies that $a=\lambda a_0$ is a (rational) multiple of $a_0$. But 
$\mu(\lambda a_0)=e\lambda a_0+(n-1)\chi(\lambda a_0)a_0=\lambda e n a_0 \neq 0$ as long
as $e\neq0$.

Next, we are going to use objects of the form $s(A)\in\D^{S_n}(X^n)$ to show that
$\deltan^K$ is injective on the image of $c$: Let $\varphi\in\DAut{X}$ be an 
autoequivalence such that $\varphi^K\in\KAut{X}$ is not the identity. Therefore, there
exists $F\in\D(X)$ with $\varphi^K([F])=[\varphi(F)]\neq[F]$.

\begin{align*}
\deltan^K(\varphi^K)([s(F)]) &= [\varphi^{[n]}(p_1^*F\oplus\cdots\oplus p_n^*F)] \\
                            &= [p_1^*\varphi(F)\otimes p_2^*\varphi(\ko_X)\otimes\cdots \otimes p_n^*\varphi(\ko_X) \oplus\cdots] \\
                            &= [s'(\varphi(F))]
\end{align*}
where we first use that, for example, $p_1^*F\cong F\boxtimes \ko_X\cdots \boxtimes \ko_X$ (($n-1$)-times) and so forth, and the second identity follows from Proposition~\ref{prop:induction}, and
 $s'\colon \D(X) \to \D^{S_n}(X^n)$, 
 $F' \mapsto \bigoplus_i \big( p_i^*F'\otimes \bigotimes_{j\neq i} p_j^*\varphi(\ko_X) \big)$.
Injectivity of $s'$ is proved like that of $s$.
\end{proof}

\begin{remark}
In particular, this result applies to K3 surfaces, since the Chern character $K(X)\to \text{CH}^*(X)$ is integral and injective, and the Chow ring is torsion-free for a K3 surface by \cite{Roitman}.
\end{remark}

\subsection{Autoequivalences of canonical covers} \label{sub:covers}

Let $Z$ be a variety with torsion canonical bundle, $\tZ$ be its canonical
cover and $\pi\colon \tZ\rightarrow Z$ be the quotient map. The relation between 
autoequivalences on $Z$ and those on the canonical cover $\tZ$ was studied in
\cite{Bridgeland-Maciocia}.

We start with the subgroup $G\embed\DAut{\tZ}$, $g\mapsto g_*$.
An autoequivalence $\tPhi$ of $\D(\tZ)$ is \emph{invariant} (under the conjugation
action of $G$ on $\DAut{\tZ}$) if $g_*\tPhi\cong\tPhi g_*$ for all $g\in G$, and
invariant autoequivalences form a subgroup $\DAut{\tZ}^G\subset\DAut{\tZ}$, which is
just the centraliser of $G$ in $\DAut{\tZ}$.
More generally, $\tPhi$ is called \emph{equivariant}, if there is a group automorphism
$\mu\colon G\isom G$ such that $g_*\tPhi\cong\tPhi\mu(g)_*$ for all $g\in G$. Evidently,
the group automorphism $\mu$ is uniquely determined by $\tPhi$. Therefore, equivariant
autoequivalences form a subgroup $\DAutt{_\eq}{\tZ}$ of $\DAut{\tZ}$ given by the 
semi-direct product $\DAut{\tZ}^G\rtimes\Aut(G)$; this is just the normaliser of $G$ in
$\DAut{\tZ}$. In particular, the subgroups of invariant or equivariant autoequivalences
contain $G$ as a normal subgroup.
We note that for double covers the two notions coincide.

By \cite[\S4]{Bridgeland-Maciocia}, an equivariant functor $\tPhi$ descends, i.e.\
there exists a functor $\Phi\in\DAut{Z}$ with functor isomorphisms
  $\pi_*\circ\tPhi \cong \Phi \circ \pi_*$ and
  $\pi^*\circ\Phi \cong \tPhi \circ\pi^*$.
Two functors $\Phi$, $\Phi'$ that $\tPhi$ descends to differ by a line
bundle twist with a power of $\omega_Z$. As $\omega_Z$ is torsion, the line bundle
twists $\M_{\omega_X}^i$ form a subgroup of $\DAut{Z}$ isomorphic to $G$.
Hence, we get a group homomorphism $\DAut{\tZ}^G\rtimes\Aut(G) \to \DAut{Z}/G$. Note that
the Serre functor $\M_{\omega_Z}[\dim Z]$ commutes with all autoequivalences,
so that $G$ is central in $\DAut{Z}$, hence a normal subgroup.

In the other direction, it is also shown in \cite[\S4]{Bridgeland-Maciocia} that
every autoequivalence of $\D(Z)$ has an equivariant lift. Two lifts differ up to the
action of $G$ (in $\DAut{\tZ}$). This induces a group homomorphism
 $\DAut{Z} \to \DAutt{_\eq}{\tZ}/G$.
Furthermore, if two autoequivalences of $\D(Z)$ lift to the same equivariant
autoequivalence, then they differ by a line bundle twist by a power of $\omega_Z$,
i.e.\ the homomorphism above is a $k:1$-map.

Summing up, we see that autoequivalences of $Z$ and of its canonical cover are related
in the following ways:
\[ \begin{array}{rl @{\qquad\text{with }} l}
 \DAut{Z}          &\xrightarrow[\text{\phantom{descend}}]{\text{lift}} \DAutt{_\eq}{\tZ}/G     & G=\sod{g_*,\id_G}     \\
 \DAutt{_\eq}{\tZ} &\xrightarrow[\text{descend}]{} \DAut{Z}/G                                   & G=\sod{\M_{\omega_Z}} 
\end{array} \]

\subsection{Functors made from a single object} \label{sub:single}

Let $X$ be a $d$-dimensional variety. An object $E\in\D(X)$ is called
\begin{itemize}
\item \emph{spherical} if $E\otimes\omega_X\cong E$ and $\Ext^*(E,E)\cong H^*(S^d,\kk)=\kk\oplus\kk[-d]$ and
\item \emph{$\IP^d$-object} if $E\otimes\omega_X\cong E$ and $\Ext^*(E,E)\cong H^*(\IP^d_\IC,\kk)$ as graded rings.
\end{itemize}
For example, a variety is (strict) Calabi--Yau if and only if its structure sheaf is spherical.
Likewise, a variety is irreducible symplectic if and only if its structure sheaf is a $\IP^d$-object.

To any object $E$, one can associate three functors:
\begin{itemize}
\item $\FFF_E=\Hom^\bullet(E,\cdot)\otimes E$ with FM kernel $E\dual\boxtimes E$;
\item $\TT_E$ with FM kernel $\cone(E\dual\boxtimes E \xrightarrow{\tr} \ko_\Delta)$;
\item $\PPP_E$ is defined by a double cone. Choose a basis $(h_1,\ldots,h_r)$ for $\Hom^2(E,E)$ and set
 $H \coloneq \sum_i h_i\dual\boxtimes\id_E-\id_{E\dual}\boxtimes h_i \colon E\dual\boxtimes
 E[-2] \to E\dual \boxtimes E$. Then $\tr\circ H=0$, so that the trace morphism factors through $\cone(H)$, and $\PPP_E$ is defined by its FM kernel $\cone(\cone(H)\to\ko_\Delta)$.
\end{itemize}
The functors can also be described using triangles
\[ \FFF_E(A)\to A\to \TT_E(A) \]
and
\[ \xymatrix@R=2.5ex@C=4em{
   \FFF_E(A)[-2] \ar[r]^-H \ar@{-->}[dr]_0 & \FFF_E(A) \ar[r] \ar[d]^\tr & \cone(H) \ar[dl] \\
                                           & A \ar[dl] \\
   \PPP_E(A)
} \]
for all objects $A\in\D(X)$.

Obviously, $\FFF_E|_{E\orth}=0$ whereas $\TT_E|_{E\orth}=\id$ and $\PPP_E|_{E\orth}=\id$.
Furthermore, if $E$ is spherical, then $\TT_E(E)=E[1-d]$ and $\TT_E$ is an autoequivalence,
called the \emph{spherical twist} along $E$. Likewise, if $E$ is a $\IP^d$-object, then
$\PPP_E(E)=E[-2d]$ and $\PPP_E$ is an autoequivalence, called the \emph{$\IP^d$-twist} along $E$.

\section{New autoequivalences on CY manifolds arising from Enriques surfaces}

Let $X$ be an Enriques surfaces and $\tX$ the K3 surface arising as canonical
double cover, with projection $\pi\colon\tX\to X$ and deck involution $\tau$.
The maps are related by $\pi^*\pi_*=\id\oplus\tau_*$ as endofunctors of $\D(\tX)$.
The action of $\tau$ is free, so that $X=\tX/\tau=[\tX/\tau]$. (All groups in this
section are of order two, so we will denote quotients and invariants just by
writing the non-trivial group element.)
The Hilbert scheme $X^{[n]}$ has torsion canonical bundle of order two, and
its canonical cover $Z$ is a Calabi--Yau variety. We denote the covering
involution of $Z$ by $\tau_Z$.

We start off with a general result that says how to obtain autoequivalences
of the Calabi--Yau variety $Z$ (up to a 2:1 ambiguity) from invariant
autoequivalences of the K3 surface $\tX$.

\begin{prop}
Any invariant autoequivalence of the K3 surface $\tX$ gives rise to two invariant
autoequivalences on the Calabi--Yau variety $Z$. More precisely, there is an 
injective group homomorphism $\DAut{\tX}^\tau \embed \DAut{Z}^{\tau_Z}/\tau_Z$.
\end{prop}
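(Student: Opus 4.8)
The plan is to realise $Z$ as a quotient and to run the induction of Section~\ref{sub:Hilbert} for a larger group than $\Sn$. Haiman's theorem together with the BKR equivalence gives $\D(X^{[n]})\cong\D^{\Sn}(X^n)$; writing the Enriques surface as $X=[\tX/\tau]$ yields $X^n=[\tX^n/(\IZ/2)^n]$ and hence $\D(X^{[n]})\cong\D^{\Gamma}(\tX^n)$, where $\Gamma=\langle\tau\rangle\wr\Sn=(\IZ/2)^n\rtimes\Sn$ acts on $\tX^n$ by deck transformations and permutations. Under this identification $\omega_{X^{[n]}}$ corresponds to $\ko_{\tX^n}$ equipped with a linearisation given by an order-two character $\chi\colon\Gamma\to\kk^*$, recording the action of each copy of $\tau$ on the holomorphic two-form; its kernel $\Gamma_0=\ker\chi$ is the index-two subgroup with $\D(Z)\cong\D^{\Gamma_0}(\tX^n)$, and the covering involution $\tau_Z$ is the residual action of $\Gamma/\Gamma_0$. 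Pinning down $\chi$ precisely is the first task, since the shape of $\Gamma_0$ controls everything below.

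Next I would produce the functor. Let $\varphi=\FM_P\in\DAut{\tX}^\tau$. Invariance gives $(\tau\times\tau)_*P\cong P$, and as the kernel of an equivalence $P$ is simple (its endomorphisms are the natural transformations of an equivalence, hence scalars), so by the discussion in Section~\ref{sub:linearisations} (with $H^2(\IZ/2;\kk^*)=0$) it carries a $\Delta\langle\tau\rangle$-linearisation $\mu$, unique up to the torsor $H^1(\IZ/2;\kk^*)=\IZ/2$; this torsor is the origin of the two autoequivalences. The exterior power $P^{\boxtimes n}$ then inherits a $\Delta\Sn$-linearisation from the permutation isomorphisms and a $\Delta(\IZ/2)^n$-linearisation from the factorwise $\mu$, and, since $\mu$ is the same on each factor, these combine to a $\Delta\Gamma$-linearisation, which I restrict to $\Delta\Gamma_0$. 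The induction construction produces $\Psi_\varphi\coloneq\FM_{\Gamma_0\cdot P^{\boxtimes n}}\in\DAut{Z}$, described on underlying complexes by $\varphi^n$ via Proposition~\ref{prop:induction}. Because the chosen linearisation extends over all of $\Gamma$, the functor $\Psi_\varphi$ commutes with the residual $\Gamma/\Gamma_0$-action, so $\Psi_\varphi\in\DAut{Z}^{\tau_Z}$; and replacing $\mu$ by the other choice alters $\Psi_\varphi$ by $\tau_Z$, so that $[\Psi_\varphi]\in\DAut{Z}^{\tau_Z}/\tau_Z$ is well defined. Functoriality of the induction (Section~\ref{sub:Hilbert}) makes $\varphi\mapsto[\Psi_\varphi]$ a group homomorphism.

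The main obstacle is injectivity. Here I would use the commuting square relating the two inductions: restricting a $\Gamma_0$-linearisation to $\Sn\subset\Gamma_0$ is pullback $q^*$ along the finite étale cover $q\colon\tX^{[n]}=[\tX^n/\Sn]\to[\tX^n/\Gamma_0]=Z$, and since both $\Psi_\varphi$ and $\deltan(\varphi)=\varphi^{[n]}$ are induced from the common kernel $P^{\boxtimes n}$, one should have $q^*\Psi_\varphi\cong\varphi^{[n]}q^*$. If $[\Psi_\varphi]$ is trivial, this forces $\varphi^{[n]}$ to become trivial after the faithful pullback $q^*$, whence $\varphi^{[n]}\cong\id$, and then $\varphi\cong\id$ by the injectivity of $\deltan$ from \cite{Ploog}. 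The genuinely delicate points, where I expect the real work, are to control the two-fold $\tau_Z$-ambiguity when transporting triviality across the square (the coset of $\tau_Z$ does not lift to an automorphism of $\tX^{[n]}$, so this case needs a separate argument) and to verify that $q^*$ is faithful enough to cancel; the K-theoretic computation of Proposition~\ref{prop:K} provides a template for the latter.
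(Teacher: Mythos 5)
Your sign bookkeeping, which is the engine of your whole construction, does not work as claimed. Write $\chi\colon\Gamma\to\kk^*$, $\chi(\epsilon,\sigma)=(-1)^{\epsilon_1+\cdots+\epsilon_n}$, for the character cutting out $\Gamma_0$ (trivial on $\Sn$ since the factors are even-dimensional). If you replace the $\Delta\langle\tau\rangle$-linearisation $\mu$ of $P$ by the other choice $-\mu$, the induced $\Delta\Gamma$-linearisation of $P^{\boxtimes n}$ changes exactly by this character: the permutation isomorphisms are untouched, while the $(\IZ/2)^n$-part $\mu_{\tau^{\epsilon_1}}\boxtimes\cdots\boxtimes\mu_{\tau^{\epsilon_n}}$ picks up the factor $(-1)^{\#\{i\,:\,\epsilon_i=1\}}=\chi(\epsilon)$. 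Since $\Gamma_0=\ker\chi$, the two linearisations restrict to the \emph{same} $\Delta\Gamma_0$-linearisation, so $\Psi_\varphi$ is literally unchanged --- it is not composed with $\tau_Z$. Hence the mechanism you propose both for the well-definedness of $[\Psi_\varphi]$ and for the origin of the ``two'' autoequivalences is wrong; what your construction would actually produce (if the rest were in place) is a single canonical invariant lift, and the two-element coset of the statement would have to come simply from projecting to $\DAut{Z}^{\tau_Z}/\tau_Z$, not from the choice of $\mu$.

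The second, more foundational gap is the identification $\D(Z)\cong\D^{\Gamma_0}(\tX^n)$, with $\tau_Z$ matching the residual $\Gamma/\Gamma_0$-action, which you assert but never prove and which follows from nothing in the paper. Haiman and BKR give $\D(\Xn)\cong\D^{\Sn}(X^n)\cong\D^{\Gamma}(\tX^n)$, and the canonical cover of the stack $[\tX^n/\Gamma]$ is indeed $[\tX^n/\Gamma_0]$; but the BKR equivalence is not a tensor functor (the paper's remark on $\ko_X^{\pm[n]}$ versus $\Pic(\Xn)$ makes exactly this point), so it does not automatically commute with forming canonical covers, i.e.\ you cannot transport $\mathrm{Spec}(\ko\oplus\omega)$ across it for free. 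Making this step honest needs either a Bridgeland--Maciocia-type descent/lifting theorem for smooth stacks or a BKR/Haiman theorem for the wreath subgroup $\Gamma_0$ acting on $\tX^n$ --- genuine content outside the paper's toolkit. Your injectivity argument is also incomplete, as you concede: from $\Psi_\varphi\cong\id$ and $q^*\Psi_\varphi\cong\varphi^{[n]}q^*$ you get $\varphi^{[n]}q^*\cong q^*$, but $q^*$ is not essentially surjective (an object $A$ is merely a direct summand of $q^*q_*A$), so this does not force $\varphi^{[n]}\cong\id$; and the coset case $\Psi_\varphi\cong\tau_{Z*}$ is untreated. By contrast, the paper's proof avoids all of this by staying on the quotient side: it composes the Bridgeland--Maciocia descent $\DAut{\tX}^\tau\to\DAut{X}/\omega_X$, the induction $\deltan$ (checking only that it sends $\M_{\omega_X}$ to $\M_{\omega_\Xn}$, so that it descends to quotients), and the Bridgeland--Maciocia lift $\DAut{\Xn}/\omega_\Xn\to\DAut{Z}^{\tau_Z}/\tau_Z$, each of which is injective.
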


\begin{proof}
Consider the following diagram:
\[ \xymatrix@C+=3em{
                                        & \DAut{X} \ar[d] \ar[r]^{\text{induce}} & \DAut{\Xn} \ar[d] \ar[r]^{\text{lift}} & \DAut{Z}^{\tau_Z}/\tau_Z \ar@{=}[d] \\
\DAut{\tX}^\tau \ar[r]^-{\text{descend}} & \DAut{X}/\omega_X  \ar@{-->}[r]^? & \DAut{\Xn}/\omega_\Xn \ar@{-->}[r]^? & \DAut{Z}^{\tau_Z}/\tau_Z
} \]
The labelled homomorphisms have been defined in Sections~\ref{sub:Hilbert} and \ref{sub:covers}.
Our aim is to show that induction and lifting factor through the maps marked with question marks.

We consider the induction first. The group actions on $\DAut{X}$ and $\DAut{\Xn}$ are given by
the line bundle twists with the canonical bundles $\omega_X$ and $\omega_\Xn$, respectively. By
construction of the induction, the twist by a line bundle $L$ on $X$, i.e.\ the Fourier--Mukai
kernel $\Delta_{X*}L$ is sent to $S_n\cdot\Delta_{X^n_*}L^{\boxtimes n}$. It follows from
Proposition~\ref{prop:induction} that this autoequivalence is just the line bundle twist with
$L^{[n]}\in\Pic(\Xn)$, where $L^{[n]}$ is the bundle given by the Fourier--Mukai transform with the universal family as kernel. Furthermore, we recall that the derived McKay correspondence
 $\D(\Xn)=\D(S_n\text{-Hilb}(X^n)) \isom \D^{S_n}(X^n)$, with the universal cycle as Fourier--Mukai
kernel, maps the structure sheaf $\ko_\Xn$ to the canonically linearised structure sheaf $\ko_{X^n}$.
This already implies that $\omega_\Xn$ is mapped to $\omega_{X^n}$, the latter with the canonical
linearisation again (because equivalences commute with Serre functors).
Altogether, we find that the induction homomorphism does map the line bundle twist with $\omega_X$
the that of $\omega_\Xn$. Hence the left hand homomorphism on quotients is well defined. It is injective, since we divide out the kernel of the induction map.

Now we turn to the lifting. Here we have to show that lift maps the line bundle twist along
$\omega_\Xn$ to the subgroup generated by push-forward $\tau_{\Xn*}$ of the deck transformation and this is a general property of the lifting. Furthermore, the induced map is injective by the same argument as in the last paragraph.

The statement about injectivity of the composition now follows immediately, since descend is always injective on the subgroup of invariant autoequivalences.
\end{proof}

One can, of course, ask whether this construction gives anything new. For this let $\tE\in\D(\tX)$ be an invariant spherical object on a K3 surface, i.e.\ $\tau_*(\tE)\cong \tE$. The associated twist functor $\TT_\tE$ is equivariant (in the sense of Bridgeland--Maciocia, see Section~\ref{sub:covers}), because by \cite[Lemma 8.21]{Huybrechts} we have
\[ \tau_* \TT_\tE \cong \TT_{\tau_*\tilde{E}} \tau_* \cong \TT_\tE \tau_* \]
Hence it induces two autoequivalences of $X$, which we will denote by $\Phi_i\cong \Phi \circ \M_{\omega}^i$. In particular, choosing $\tE=\ko_\tX$, we obtain two \emph{canonical} autoequivalences
of $X$. Let us note some properties in this case. For an object $A$ in a triangulated category, $\langle A\rangle$ denotes the smallest triangulated subcategory containing it.

\begin{lem}
With notation as above we have $\Phi_i(A)\cong A[-1]$ for any $A \in \pi_*\langle\tE\rangle$. Furthermore, $\Phi_i(B)\cong B$ for any $B \in \pi_*(\tE^\bot)$.
\end{lem}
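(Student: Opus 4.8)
The plan is to transport both statements from the K3 cover $\tX$ to $X$ through the descent isomorphisms $\pi_*\circ\tPhi\cong\Phi\circ\pi_*$ and $\pi^*\circ\Phi\cong\tPhi\circ\pi^*$ of Section~\ref{sub:covers}, applied to $\tPhi=\TT_\tE$ and $\Phi=\Phi_0$. First I would remove the index $i$. As $\tX\to X$ is the canonical cover we have $\pi^*\omega_X\cong\ko_\tX$, so the projection formula gives $\pi_*(\tilde A)\otimes\omega_X\cong\pi_*(\tilde A\otimes\pi^*\omega_X)\cong\pi_*(\tilde A)$ for every $\tilde A\in\D(\tX)$; thus $\M_{\omega_X}$ acts as the identity on $\im(\pi_*)$. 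Since $\Phi_i\cong\Phi\circ\M_{\omega_X}^i$ and the objects in question lie in $\im(\pi_*)$, the two descents $\Phi_0$ and $\Phi_1$ agree on $\pi_*\langle\tE\rangle$ and on $\pi_*(\tE^\bot)$, and it suffices to argue for $\Phi=\Phi_0$.

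With this reduction the second assertion is immediate: for $B=\pi_*\tilde B$ with $\tilde B\in\tE^\bot$ the descent isomorphism gives $\Phi(B)\cong\pi_*\TT_\tE(\tilde B)\cong\pi_*\tilde B=B$, using $\TT_\tE|_{\tE^\bot}=\id$ from Section~\ref{sub:single}. For the first assertion, writing $A=\pi_*\tilde A$ with $\tilde A\in\langle\tE\rangle$, the same isomorphism yields $\Phi(A)\cong\pi_*\TT_\tE(\tilde A)$, so everything reduces to proving that $\TT_\tE(\tilde A)\cong\tilde A[-1]$ for all $\tilde A\in\langle\tE\rangle$, after which one applies $\pi_*$.

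The crux, and the step I expect to be the main obstacle, is therefore to show that $\TT_\tE$ restricts to the shift $[-1]$ on $\langle\tE\rangle$. On the generator this is exactly the spherical relation $\TT_\tE(\tE)=\tE[1-d]=\tE[-1]$ for a spherical object on a surface ($d=2$). To propagate it one argues by d\'evissage: every object of $\langle\tE\rangle$ is obtained from shifts of $\tE$ by finitely many cones, and since $\TT_\tE$ is exact it carries such a cone to the cone of the image map. The only thing to control is the induced action on the connecting morphisms, i.e.\ on $\Ext^*(\tE,\tE)=\kk\oplus\kk[-2]$; being an equivalence, $\TT_\tE$ rescales the generator of $\Ext^2(\tE,\tE)$ by a unit, and $\cone(\lambda f)\cong\cone(f)$ for a unit $\lambda$, so the inductive comparison with $[-1]$ goes through object by object. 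The clean way to phrase this---and to close the naturality gap left by a bare object-wise induction---is to use the intrinsic formality of the graded algebra $\Ext^*(\tE,\tE)\cong\kk[x]/(x^2)$ with $|x|=2$, which identifies $\langle\tE\rangle$ with perfect complexes over this algebra and exhibits $\TT_\tE$ as the shift; setting up this functorial identification is where the real work lies, whereas the transport to $X$ in the preceding steps is formal.
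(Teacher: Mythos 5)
Your argument is correct and follows essentially the same route as the paper's own proof: both rest on the descent isomorphism $\pi_*\circ\TT_{\tE}\cong\Phi_i\circ\pi_*$ together with the twist values $\TT_{\tE}(\tE)\cong\tE[-1]$ and $\TT_{\tE}|_{\tE^\bot}=\id$. The only difference is one of completeness: the paper computes only on the generator $\tE$ and declares the case of a general object of $\langle\tE\rangle$ immediate, whereas you correctly identify the extension to iterated cones as the step needing justification and close it with the unit-rescaling d\'evissage and the formality of $\Ext^*(\tE,\tE)\cong\kk[x]/(x^2)$ (you also make explicit, via the projection formula, why the index $i$ is irrelevant on objects in the image of $\pi_*$, which the paper leaves implicit).
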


\begin{proof}
By definition of equivariant functors,
 $\Phi_i \pi_*(\tE)\cong \pi_*\tPhi(\tE)\cong \pi_*\tE[-1]$,
and
 $\Phi_i \pi_*(\tF)\cong \pi_*\tPhi(\tF)\cong \pi_*(\tF)$
for any $\tF\in\tE^\bot$. The claims follow immediately.
\end{proof}

\begin{remark}
Note that $\pi_*(\tE^\bot)$ is not a full subcategory, but nevertheless we have, on objects, that $\pi_*(\tE^\bot)\subset (\pi_*\tE)^\bot$. Indeed, for $\tF\in\tE^\bot$ one computes
\[ \Hom^*(\pi_*(\tE),\pi_*(\tF)) \cong \Hom^*(\pi^*\pi_*(\tE),\tF) \cong
   \Hom^*(\tE \oplus \tau^*(\tE), \tF) =0 , \]
because $\tau^*(\tE)\cong\tE$. On the other hand, we can check that for any $C \in (\pi_*\tE)^\bot$ we have that $C\oplus C\otimes \omega_X \in \pi_*(\tE^\bot)$. Hence, $\Phi_i(C)\cong C$ or $\Phi_i(C)\cong C\otimes \omega_X$.
\end{remark}

Using the above, we get

\begin{thm}\label{thm:enriques}
Given a K3 surface $\tX$ with a fixed point free involution $\tau$, so that $\tX/\tau$ is an Enriques surface $X$, and a $\tau$-invariant spherical object $\tE$ in $\D(\tX)$, the spherical twist $\TT_\tE$ induces two equivalences $\Phi_1$, $\Phi_2$ of $\D(X)$ and hence there exist induced equivalences $\Phi_i^{[n]}$ of $\Xn$. The lifts of the functors $\Phi_i^{[n]}$ to the covering Calabi--Yau manifold $Z$ are exotic autoequivalences.
\end{thm}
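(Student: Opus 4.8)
The plan is to show that the autoequivalence of $\D(Z)$ produced by the chain of constructions --- call it $\Psi$, the lift to $Z$ of the induced functor $\Phi_i^{[n]}\in\DAut{\Xn}$ --- is \emph{exotic}, i.e.\ it is none of the following: a standard autoequivalence, a spherical twist, or a $\IP^n$-twist. That $\Psi$ is an autoequivalence at all is automatic from Sections~\ref{sub:Hilbert} and~\ref{sub:covers}; the real task is to pin down its action on a handful of objects precisely enough to separate it from all three families simultaneously. Throughout, let $p\colon Z\to\Xn$ denote the canonical double cover.

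First I would trace $\Psi$ through the construction on well-chosen objects. By the Lemma preceding the theorem, $\Phi_i$ acts as $[-1]$ on every object of $\pi_*\langle\tE\rangle$ and as the identity on every object of $\pi_*(\tE\orth)$. Fix $\tF\in\tE\orth$. Proposition~\ref{prop:induction} says that $\Phi_i^{[n]}$ acts on underlying complexes as $\Phi_i^{\boxtimes n}$; hence, on the $\Sn$-equivariant object $U$ built from one tensor factor $\pi_*\tE$ and $n-1$ factors $\pi_*\tF$ (summed over the $\Sn$-orbit), it acts as the shift $[-1]$, while it fixes $V\coloneq(\pi_*\tF)^{\boxtimes n}$ (up to a linearisation that does not affect the shift). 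Finally, the lift relation $\Psi\circ p^*\cong p^*\circ\Phi_i^{[n]}$ from Section~\ref{sub:covers}, together with the faithfulness of $p^*$ for the finite cover $p$, yields nonzero objects with $\Psi(p^*U)\cong p^*U[-1]$ and $\Psi(p^*V)\cong p^*V$.

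Non-standardness is then immediate: a standard autoequivalence is (an exact autoequivalence of the standard heart) composed with a single shift $[m]$, and so shifts the cohomological position of \emph{every} object by the same amount $-m$. Since $\Psi$ shifts $p^*U$ by $-1$ but fixes $p^*V$, no uniform $m$ can account for it, so $\Psi$ is not standard. (It is irrelevant here whether $\tE$, and hence $p^*U$, is a genuine sheaf; only the two distinct shifts enter.) To exclude the two kinds of twists I would pass to $K(Z)$, where they have rigid numerical signatures: on the even-dimensional Calabi--Yau $Z$ a $\IP^n$-twist acts as the identity, while a spherical twist $\TT_F$ acts by the reflection $v\mapsto v-\chi(F,v)[F]$, so that $[\TT_F]-\id$ has rank at most $1$. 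It therefore suffices to exhibit rank at least $2$ for $[\Psi]-\id$. For this I would use that the $[-1]$-shift multiplies $K$-classes by $-1$: taking two elements $\tF_1,\tF_2\in\tE\orth$ with linearly independent classes $[p^*U_1],[p^*U_2]$ gives $[\Psi(p^*U_j)]-[p^*U_j]=-2[p^*U_j]$, two independent vectors in the image of $[\Psi]-\id$.

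The step I expect to be the main obstacle is exactly this rank estimate on $K(Z)$. One must propagate the rank-one deviation of $\TT_\tE$ on $K(\tX)$ through descent, through the $n$-fold induction --- which effectively replaces the near-reflection $[\Phi_i]$ by a tensor power whose $(-1)$-eigenspace is large --- and through the lift, and one must verify that the candidate classes $[p^*U_j]$ remain independent rather than collapsing under $p^*$. The delicate point is that the orthogonality in the Lemma holds only up to the torsion twist by $\omega_X$ (the remark after the Lemma shows $\Phi_i$ may send $C\in(\pi_*\tE)\orth$ to $C\otimes\omega_X$), so $[\Phi_i]$ is a reflection only modulo $\M_{\omega_X}$, and one has to check these torsion twists do not conspire with the kernel of $p^*$ to force the rank back down to $1$. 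By contrast, the shift-based non-standardness argument is robust and essentially formal. Proposition~\ref{prop:K} is the tool that keeps this $K$-theoretic bookkeeping consistent along $\deltan$.
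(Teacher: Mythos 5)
Your core argument coincides with the paper's own proof. Both rest on the same two ingredients: the Lemma giving $\Phi_i(\pi_*\tE)\cong\pi_*\tE[-1]$ and $\Phi_i(\pi_*\tF)\cong\pi_*\tF$ for $\tF\in\tE\orth$, and Proposition~\ref{prop:induction}, which lets one evaluate $\Phi_i^{[n]}$ on box-product test objects; both then conclude non-standardness because no functor of the form $\M_L\circ f_*\circ[m]$ can shift one nonzero object while fixing another. Where you differ, the differences are in your favour. The paper tests on the pure powers $\pi_*(\tE)^{\boxtimes n}$ and $\pi_*(\tF)^{\boxtimes n}$ and actually stops on $\D(\Xn)$: it never transfers the conclusion through the cover $p\colon Z\to\Xn$, whereas your use of $\Psi\circ p^*\cong p^*\circ\Phi_i^{[n]}$ together with $p^*A\neq0$ for $A\neq0$ supplies exactly that missing glue, and the theorem is a statement about $Z$. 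Moreover the paper gives no argument at all that the lifted functors fail to be spherical or $\IP^n$-twists --- its proof establishes only non-standardness, although ``exotic'' as defined in the introduction demands more --- so your K-theoretic criterion ($\IP^n$-twists act trivially on $K(Z)$, spherical twists satisfy $\rank([\TT_F]-\id)\le1$) is a genuine addition, and your mixed orbit objects $U_j$ are well chosen for it: they are shifted by the odd amount $[-1]$, hence visible in K-theory, while the paper's test objects are shifted by $-n$, which $K(Z)$ cannot detect when $n$ is even.

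That said, the step you flag as the main obstacle is a real gap in your proposal (one the paper's proof shares, since it omits this half of ``exotic'' entirely), and two distinct issues hide in it. First, the linear independence of $[p^*U_1],[p^*U_2]$ in $K(Z)$ is unverified, and Proposition~\ref{prop:K} will not hand it to you: its hypotheses fail here ($K(X)$ has $2$-torsion for an Enriques surface, e.g.\ $[\omega_X]-[\ko_X]$), it concerns $K(\Xn)$ rather than $K(Z)$, and its conclusion is injectivity of $\deltan^K$, not a rank bound; note also that $p^*$ on K-theory has nontrivial kernel (classes with $v\cdot[\omega_\Xn]=-v$), so collapsing is a genuine possibility to exclude. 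Second, the identity $[\Psi(p^*U_j)]=-[p^*U_j]$ needs $\Phi_i^{[n]}(U_j)\cong U_j[-1]$ as \emph{equivariant} objects; Proposition~\ref{prop:induction} controls only the underlying complex, and a flipped linearisation changes the K-class (compare $\ko_X^{[n]}$ versus $\ko_X^{-[n]}$, which go to $\ko_{\Xn}$ versus $\ko_{\Xn}(D)$ under McKay). Neither issue touches your non-standardness argument, which is complete and matches the paper's. Finally, a small misreading: the $\omega_X$-ambiguity you worry about affects only $(\pi_*\tE)\orth$; your test objects come from $\pi_*(\tE\orth)$, where the Lemma gives $\Phi_i(B)\cong B$ on the nose, so that particular worry is vacuous.
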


\begin{proof}
First note that for the known examples of spherical objects $\tE\in\D(\tX)$, the right orthogonal $\tE^\bot$ is never empty. Hence, we see that the functors $\Phi_i$ are neither shift nor the identity and behave like a spherical functor. They can be lifted to $\D(\Xn)$ by Equation \ref{liftingtohilb}. Consider $n=2$ for simplicity and set $\Phi=\Phi_1$. We have seen above that $\Phi(\pi_*(\tE))\cong \pi_*(\tE)[-1]$ and, if $\tE^\bot \neq \emptyset$ and contains, say $\tF$, then $\Phi\pi_*(\tF)\cong \pi_*(\tF)$. Thus, $f(\Phi)$ applied to the object $\pi_*(\tE)\boxtimes \pi_*(\tE)$ is the shift by $-2$, whereas if we apply it to $\pi_*(\tF)\boxtimes \pi_*(\tF)$ we get the identity. Hence this equivalence of $\D(X^{[2]})$ is not in the group generated by shifts, automorphisms and line bundle twists. Similar arguments work for $n\neq 2$.
\end{proof}

\section{New autoequivalences on Hilbert schemes of K3 surfaces}

Let $X$ be a K3 surface and let $E\in \D(X)$ be a spherical object. Then $E^{\boxtimes n}\in \D(X^n)$ is
$\Sn$-invariant and simple, as follows from the K\"unneth formula:
 $\End(E^{\boxtimes n})=\End(E)^{\otimes n}=\kk$.
Hence, from the general theory of Section~\ref{sub:linearisations}, $E^{\boxtimes n}$ is
$\Sn$-linearisable in two ways. In this particular case, the following chain of
canonical isomorphisms ($\sigma\in \Sn$, considered both a permutation as well
as an automorphism of $X^n$) provides one linearisation:
\begin{align*}
       \sigma^*(E^{\boxtimes n})
&=     \sigma^*(\pi_1^*E\otimes\cdots\otimes\pi_n^*E) \\
&\cong \sigma^*\pi_1^*E\otimes\cdots\otimes\sigma^*\pi_n^*E \\
&\cong (\pi_1\sigma)^*E\otimes\cdots\otimes(\pi_n\sigma)^*E \\
&\cong \pi_{\sigma_1}^*E\otimes\cdots\otimes\pi_{\sigma_n}^*E \\
&\cong \pi_1^*E\otimes\cdots\otimes\pi_n^*E = E^{\boxtimes n}.
\end{align*}
We denote this linearised object by $E^{[n]}\in \D^\Sn(X^n)$. The other one is
given by $\lambda_\sigma=-1$ for all odd permutations (which forces
$\lambda_\sigma=1$ for the even ones); we denote this linearised object by
$E^{-[n]}\in\D^\Sn(X^n)$.
Note that the same arguments apply to the kernel $K$
of an autoequivalence of $\D(X)$, since any such object is simple by \cite[Lem.\ 4]{Ploog}

\begin{lem}
The objects $E^{[n]}$ and $E^{-[n]}$ are not isomorphic.
\end{lem}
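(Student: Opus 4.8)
The plan is to show that any isomorphism $E^{[n]}\cong E^{-[n]}$ in $\D^{\Sn}(X^n)$ would force the two linearisations to coincide, which they do not. First I would record that, after applying the forgetful functor, an isomorphism in the equivariant category has an underlying isomorphism $f\colon E^{\boxtimes n}\isom E^{\boxtimes n}$ in $\D(X^n)$, since $E^{[n]}$ and $E^{-[n]}$ share the same underlying object $E^{\boxtimes n}$. Writing $\alpha$ for the canonical linearisation (all $\lambda_\sigma=1$) and $\beta$ for the signed one, such an $f$ must satisfy the intertwining relation $\beta_\sigma\circ f=(\sigma_* f)\circ\alpha_\sigma$ for every $\sigma\in\Sn$.

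The key input is simplicity. By the K\"unneth computation already recorded, $\End(E^{\boxtimes n})=\End(E)^{\otimes n}=\kk$, so $f=c\cdot\id$ for some scalar $c\in\kk^*$. Because $\sigma_*$ is a $\kk$-linear functor, it sends $c\cdot\id_{E^{\boxtimes n}}$ to $c\cdot\id_{\sigma_*E^{\boxtimes n}}$; the intertwining relation then reads $\beta_\sigma\cdot c=c\cdot\alpha_\sigma$, that is $\beta_\sigma=\alpha_\sigma$ for all $\sigma$. In other words, scalars act trivially on the torsor of linearisations, so distinct linearisations of a simple invariant object necessarily land in distinct isomorphism classes.

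It then remains to observe that $\alpha$ and $\beta$ are genuinely distinct: by construction they differ by the character $\operatorname{sgn}\colon\Sn\to\kk^*$, via $\beta_\sigma=\operatorname{sgn}(\sigma)\,\alpha_\sigma$, and this character is nontrivial for $n\ge 2$ (it is the nonzero element of $H^1(\Sn;\kk^*)=\IZ/2\IZ$). Hence $\beta_\sigma=\alpha_\sigma$ fails for any odd $\sigma$, contradicting the existence of $f$ and proving $E^{[n]}\not\cong E^{-[n]}$.

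I expect the only real subtlety to be bookkeeping: getting the direction of the intertwining square right and confirming that $\sigma_*$ carries a scalar endomorphism to the same scalar, so that Schur's lemma genuinely collapses the two linearisations. Conceptually this is just Schur's lemma for the simple object $E^{\boxtimes n}$ combined with the torsor description of linearisations from Section~\ref{sub:linearisations}; the same argument applies verbatim to the kernel $K$ of any autoequivalence, since such kernels are simple by \cite{Ploog}.
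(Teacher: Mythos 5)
Your proof is correct and is essentially the paper's own argument: both use simplicity of $E^{\boxtimes n}$ (via K\"unneth) to reduce any equivariant morphism to a scalar, and then observe that the intertwining condition forces the two linearisations to agree, which fails because they differ by the sign character. The paper just writes this out for $n=2$ (where the condition collapses to $f=-f$) and also notes, as you do, the alternative of quoting the torsor description of linearisations of a simple invariant object.
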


\begin{proof}
Assume, for simplicity, that $n=2$, the other cases being analogous. Any morphism
$f\colon E^{[2]}\to E^{-[2]}$ is given by a map
$f\colon E^{\boxtimes 2}\to E^{\boxtimes 2}$ (i.e.\ a scalar $f\in\kk$) such
that $f\circ\lambda_\sigma^{E^{[2]}} = \lambda_\sigma^{E^{-[2]}}\circ f$
(where $\sigma=(1~2)$), i.e.\ $f=-f$, so that there is no isomorphism.

Alternatively, just use that a simple $S_n$-invariant object has two distinct linearisations, see Section \ref{sub:the varieties}.
\end{proof}

\begin{lem}
Let $E\in\D(X)$ be a spherical object (which is the same as a
$\IP^1$-object). Then $E^{[n]}$ and $E^{-[n]}$ are $\IP^n$-objects
in $\D^\Sn(X^n)$.
\end{lem}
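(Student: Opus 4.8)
The plan is to compute the graded endomorphism ring of $E^{[n]}$ inside the equivariant category and to check separately that tensoring with the canonical bundle is trivial. Since $X$ is a K3 surface, $\omega_X\cong\ko_X$, hence $\omega_{X^n}\cong\ko_{X^n}$; moreover the holomorphic two-form has even degree, so permuting the factors of $X^n$ introduces no sign and the canonical $\Sn$-linearisation on $\omega_{X^n}$ is the trivial one. Thus the canonical bundle of the stack $[X^n/\Sn]$ is trivial, and the condition $E^{[n]}\otimes\omega\cong E^{[n]}$ (and likewise for $E^{-[n]}$) holds automatically.

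It then remains to identify the graded ring $\bigoplus_k\Hom_{\D^{\Sn}(X^n)}(E^{[n]},E^{[n]}[k])$. By the definition of morphisms in the equivariant category this is the invariant part $\big(\Ext^*_{X^n}(E^{\boxtimes n},E^{\boxtimes n})\big)^{\Sn}$, where the $\Sn$-action is the one induced by the linearisation. By the K\"unneth formula, $\Ext^*_{X^n}(E^{\boxtimes n},E^{\boxtimes n})\cong A^{\otimes n}$ as graded rings, where $A\coloneq\Ext^*(E,E)\cong\kk\oplus\kk\,\epsilon$ with $\deg\epsilon=2$ and $\epsilon^2=0$ (this is where sphericity on the surface enters). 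The $\Sn$-action permutes the tensor factors; because $\epsilon$ sits in even degree there are no Koszul signs, so this is the plain permutation action. Finally, the two linearisations differ by a scalar $\lambda_\sigma=\pm1$, which cancels in the conjugation action on endomorphisms; hence $E^{[n]}$ and $E^{-[n]}$ induce the \emph{same} action and the same invariant ring, and it suffices to treat $E^{[n]}$.

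I would then compute the invariants explicitly. Writing $\epsilon_i$ for $\epsilon$ placed in the $i$-th factor, the algebra $A^{\otimes n}$ is the commutative ring $\kk[\epsilon_1,\dots,\epsilon_n]/(\epsilon_i^2)$, and its invariant part is the symmetric one. Setting $h\coloneq\epsilon_1+\dots+\epsilon_n$, the relations $\epsilon_i^2=0$ give $h^k=k!\,e_k(\epsilon_1,\dots,\epsilon_n)$, so $h^k\neq0$ for $k\leq n$ while $h^{n+1}=0$. A dimension count ($\dim(A^{\otimes n})^{\Sn}=n+1$, with one dimension in each even degree $0,2,\dots,2n$, since $\Sn$ acts transitively on the monomials of fixed degree) shows that $1,h,\dots,h^n$ form a basis, whence the invariant ring is $\kk[h]/(h^{n+1})$ with $\deg h=2$. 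This is precisely $H^*(\IP^n_\IC,\kk)$ as a graded ring, which together with the first paragraph establishes that $E^{[n]}$ and $E^{-[n]}$ are $\IP^n$-objects.

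The main obstacle is the careful bookkeeping of the $\Sn$-action on $\Ext^*(E^{\boxtimes n},E^{\boxtimes n})$: one must verify that no signs intervene --- both the vanishing of Koszul signs, using that $\epsilon$ is even, and the cancellation of the linearisation scalar $\pm1$ in the conjugation action --- and that the invariants assemble into the ring $\kk[h]/(h^{n+1})$ at the level of ring structure, not merely as a graded vector space. Everything else is a formal consequence of K\"unneth together with the sphericity of $E$.
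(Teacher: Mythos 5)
Your proof is correct and follows essentially the same route as the paper's: compute the equivariant Ext-algebra as the $\Sn$-invariants of the K\"unneth algebra $(\Ext^*(E,E))^{\otimes n}$, observe that the linearisation scalar $\pm1$ cancels under conjugation so both linearisations give the same action, and identify the invariants with $\kk[h]/(h^{n+1})$. Your version is in fact slightly tidier: you verify the condition $E^{[n]}\otimes\omega\cong E^{[n]}$ explicitly (the paper leaves it implicit) and you treat general $n$ uniformly via $h^k=k!\,e_k$ and a dimension count, where the paper writes out $n=2$ and asserts the generalisation.
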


\begin{proof}
We are looking at the full endomorphism algebras:
\begin{align*}
   \Hom^*_{\D^\Sn(X^n)}(E^{\pm[n]},E^{\pm[n]})
&= \Hom^*_{X^n}(E^{\boxtimes n},E^{\boxtimes n})^\Sn \\
&= (\Hom^*_X(E,E)^{\otimes n})^\Sn \\
&= (\kk[h_1]/h_1^2 \otimes\cdots\otimes\kk[h_n]/h_n^2)^\Sn
    \text{ with $\deg(h_i)=2$} \\
&= \kk[h]/h^{2n} \text{ with $h=h_1+\cdots+h_n$}
\end{align*}
where the first equality is the definition of morphisms in $\D^\Sn(X^n)$ and
the second is the K\"unneth formula.

Note that the $\Sn$-action on the Hom-space in the first line does not depend
on the choice of linearisation. For simplicity consider only the $n=2$ case. Given a morphism
$F\colon E\boxtimes E\to E\boxtimes E[m]$ for some $m\in\IZ$, we know from the K\"unneth formula that it
is of the form $F=\sum_i f_i\boxtimes g_i$ with $f_i\colon E\to E[k_i]$,
$g_i\colon E\to E[l_i]$ and $m=k_i+l_i$.
Now $F$ is an $\Sn$-invariant map if and only if
 $\sigma_*(F)\circ\lambda=\lambda\circ F$
(where $\sigma=(1~2)$ as before and $\lambda\coloneq\lambda_\sigma^{\pm[2]}=\pm1$).
Note that $\sigma_*(F)=\sum_i g_i\boxtimes f_i$. Since $\lambda$ is a scalar,
it commutes anyway, and the condition is just
$\sum_i g_i\boxtimes f_i=\sum_i f_i\boxtimes g_i$.
As, up to constants, $\End^\bullet(E)$ only has the maps $\id$ and $h\colon E\to E[2]$, this forces $m\in\{0,2,4\}$ and
 $F=\sum_{\tau\in \PP_{2,m/2}}
       \pi_{\tau_1}^*h\otimes\cdots\otimes\pi_{\tau_{m/2}}^*h$, where $\PP_{n,k}$ is the set of $k$-element subsets of $\{1,\dots,n\}$. The
argument generalises to $n>2$.
\end{proof}

\begin{cor}\label{orthogonal}
We have $E^{-[n]} \in (E^{[n]})^\bot$.
\end{cor}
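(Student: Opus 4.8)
The plan is to turn the desired vanishing $\Hom^\bullet_{\D^\Sn(X^n)}(E^{[n]},E^{-[n]})=0$ into a purely representation-theoretic statement. Recall that $E^{[n]}$ and $E^{-[n]}$ are the two $\Sn$-linearisations of one and the same simple object $E^{\boxtimes n}$, and that by construction they differ exactly by the sign character, the set of linearisations being a torsor over $H^1(\Sn;\kk^*)=\IZ/2$. Unwinding the definition $\Hom_{\coh^G}((A,\alpha),(B,\beta))=\Hom_X(A,B)^G$ with $A=B=E^{\boxtimes n}$, a morphism $E^{[n]}\to E^{-[n]}$ is exactly an element of $\End^\bullet_{X^n}(E^{\boxtimes n})$ on which $\Sn$ acts through the sign character (the extra $\beta_g^{-1}$ contributes the factor $\mathrm{sgn}(g)$). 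Thus the whole problem reduces to showing that the antisymmetric part of this endomorphism algebra vanishes.

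For this I would reuse the computation from the proof of the previous lemma: by the K\"unneth formula $\End^\bullet_{X^n}(E^{\boxtimes n})=\kk[h_1]/h_1^2\otimes\cdots\otimes\kk[h_n]/h_n^2$ with $\deg h_i=2$, and $\Sn$ simply permutes the $h_i$ (there are no Koszul signs, since every $h_i$ lies in even degree). Writing this algebra as the quotient of $R=\kk[h_1,\dots,h_n]$ by the $\Sn$-stable ideal $(h_1^2,\dots,h_n^2)$, and using that taking the sign-isotypic component is exact in characteristic $0$, the antisymmetric part is the image of
\[ R_{\mathrm{sgn}} = \big(\textstyle\prod_{i<j}(h_i-h_j)\big)\cdot R^{\Sn}. \]
Every monomial of the Vandermonde product $\prod_{i<j}(h_i-h_j)$ uses the exponents $0,1,\dots,n-1$, so it contains a factor $h_i^{\,n-1}$; as soon as $n\geq 3$ this is divisible by some $h_i^2$, so the Vandermonde maps to $0$ modulo the relations, and hence so does all of $R_{\mathrm{sgn}}$. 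Therefore the antisymmetric part is zero and $E^{-[n]}\in(E^{[n]})\orth$.

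The step that is genuinely sharp, and which I expect to be the main point to get right, is the inequality $n\geq 3$. The same computation shows that for $n=2$ the Vandermonde $h_1-h_2$ is \emph{not} killed by the relations, producing a nonzero antisymmetric class in cohomological degree $2$; concretely, for $E=\ko_X$ this is the one-dimensional sign-isotypic part of $H^\bullet(X\times X,\ko)$ in degree $2$. So I would read the statement for $n\geq 3$, treating $n=2$ separately. A minor secondary check is the claim that $\Sn$ acts by the plain permutation, rather than a sign-twisted one, on $\End^\bullet(E^{\boxtimes n})$; this is forced by the previous lemma, since its invariants are nonzero --- indeed $E^{[n]}$ is a $\IP^n$-object.
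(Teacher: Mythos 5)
Your proof is correct, and it supplies what the paper does not: the paper's entire proof of this corollary is the sentence ``a simple application of the previous two lemmas,'' with no actual argument. Your unwinding of equivariant morphisms --- that $\Hom^\bullet_{\D^{S_n}(X^n)}(E^{[n]},E^{-[n]})$ is the sign-isotypic part of the conjugation action on $\End^\bullet_{X^n}(E^{\boxtimes n})\cong R/(h_1^2,\dots,h_n^2)$, $R=\kk[h_1,\dots,h_n]$, the action being plain permutation of the $h_i$ since all classes sit in even degree --- is exactly right, and the Vandermonde argument (the sign part of $R$ is $\prod_{i<j}(h_i-h_j)\cdot R^{S_n}$, isotypic components are exact in characteristic $0$, and every monomial of the Vandermonde is divisible by some $h_i^2$ once $n\geq 3$) gives a complete proof for $n\geq 3$.

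More importantly, your reservation about $n=2$ is not excess caution: the corollary as stated is \emph{false} for $n=2$, and you have effectively produced the counterexample. The class $\id\boxtimes h-h\boxtimes\id$ is a nonzero element of $\Hom^2_{\D^{S_2}(X^2)}(E^{[2]},E^{-[2]})$; for $E=\ko_X$ it is the antisymmetric part of $H^2(X\times X,\ko_{X\times X})$. In fact, for $n=2$ the paper's own two lemmas imply the \emph{negation} of the corollary: $S_2$ has only the trivial and the sign representation, $\End^\bullet_{X^2}(E^{\boxtimes 2})$ is $4$-dimensional, and the invariant part $\Hom^\bullet_{\D^{S_2}}(E^{[2]},E^{[2]})\cong\kk[h]/h^{3}$ is $3$-dimensional, so the sign part --- which is precisely $\Hom^\bullet(E^{[2]},E^{-[2]})$ --- is $1$-dimensional. (Note that the lemma's displayed computation misprints the endomorphism ring as $\kk[h]/h^{2n}$ instead of $\kk[h]/h^{n+1}$; for $n=2$ the misprinted ring is $4$-dimensional and would leave no room for a sign part, which is presumably where the error crept in.) The non-vanishing can also be confirmed geometrically: under the derived McKay correspondence, $\ko_X^{-[2]}$ corresponds to a line bundle $L$ on $X^{[2]}$ of Beauville--Bogomolov square $-2$ (a twist by half the exceptional divisor), and Riemann--Roch for $X^{[2]}$, $\chi(L)=\binom{q(L)/2+3}{2}$, gives $\chi(L)=1\neq 0$. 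So the statement should carry the hypothesis $n\geq 3$, exactly as you propose, and with that hypothesis your argument is a complete and correct replacement for the paper's proof.
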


\begin{proof}
A simple application of the previous two lemmas.
\end{proof}



\begin{remark}
While it is true that $\ko_{X}^{-[n]}$ and $\ko_{X}^{[n]}$ are line bundle
objects on $\D^\Sn(X^n)$, and the square of the former is the latter
 ($\ko_{X}^{-[n]}\otimes\ko_{X}^{-[n]}=\ko_{X}^{[n]}$),
this does not contradict the fact that $\Pic(\Xn)$ has no torsion: the
derived McKay equivalence $\Phi\colon\D^\Sn(X^n)\isom\D(\Xn)$ does not respect
tensor products. Using the universal cycle as Fourier--Mukai kernel for $\Phi$,
as is done in \cite{BKR}, it is easy to check that $\Phi$ maps
$\ko_{X}^{[n]}$ to $\ko_{X^{[n]}}$ and $\ko_{X}^{-[n]}$ to $\ko_{X^{[n]}}(D)$, where
$D$ is the exceptional divisor.
\end{remark}

Given a spherical object $E\in\D(X)$, we have four canonically associated
autoequivalences of $\D^\Sn(X^n)\cong\D(\Xn)$. Write $\kp$ for the kernel of the spherical twist $\TT_E$.

\begin{tabular}{l p{11.5cm}}
$\PP_{E^{[n]}}$  & using the $\IP^n$-object $E^{[n]}\in\D^\Sn(X^n)$ \\
$\PP_{E^{-[n]}}$ & using the $\IP^n$-object $E^{-[n]}\in\D^\Sn(X^n)$ \\
$\TT_E^{\;[n]}$  & inducing $\TT_E$ with the constant diagonal
                    $\Sn$-linearisation on $\kp^{\boxtimes n}$\\
$\TT_E^{\;-[n]}$ & inducing $\TT_E$ with the other diagonal
                    $\Sn$-linearisation on $\kp^{\boxtimes n}$
\end{tabular}

The values of these functors on the objects $E^{\pm[n]}$ are crucial:

\begin{lem}\label{lem:value-equiv}
The following equalities hold.
\[ \begin{array}{l rcl @{,\qquad} rcl}
 \PP_{E^{[n]}}\colon   & E^{[n]} &\mapsto & E^{[n]}[-2n]   & E^{-[n]} &\mapsto & E^{-[n]} \\
 \PP_{E^{-[n]}}\colon  & E^{[n]} &\mapsto & E^{[n]}       & E^{-[n]} &\mapsto & E^{-[n]}[-2n] \\
 \TT_E^{\;[n]}\colon  & E^{[n]} &\mapsto & E^{[n]}[-n]   & E^{-[n]} &\mapsto & E^{-[n]}[-n]  \\
 \TT_E^{\;-[n]}\colon & E^{[n]} &\mapsto & E^{[n]}[-n]   & E^{-[n]} &\mapsto & E^{-[n]}[-n]
\end{array} \]
\end{lem}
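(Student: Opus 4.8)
The plan is to split the four rows into the two $\IP^n$-twist rows and the two induced spherical-twist rows, since these rest on different ingredients.

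For $\PP_{E^{[n]}}$ and $\PP_{E^{-[n]}}$ I would argue purely formally from the properties of $\IP^n$-twists recalled in Section~\ref{sub:single}. By the preceding lemma $E^{[n]}$ and $E^{-[n]}$ are $\IP^n$-objects, so the relation $\PPP_F(F)=F[-2d]$ with $d=n$ gives the two diagonal entries $\PP_{E^{[n]}}(E^{[n]})=E^{[n]}[-2n]$ and $\PP_{E^{-[n]}}(E^{-[n]})=E^{-[n]}[-2n]$ at once. For the off-diagonal entries I would use $\PPP_F|_{F\orth}=\id$ together with Corollary~\ref{orthogonal}: from $E^{-[n]}\in(E^{[n]})\orth$ one reads off $\PP_{E^{[n]}}(E^{-[n]})=E^{-[n]}$. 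The reverse orthogonality $E^{[n]}\in(E^{-[n]})\orth$ is proved by the very same computation as Corollary~\ref{orthogonal}, since interchanging the two linearisations still leaves the relevant $\Sn$-action on $\Hom^*_{X^n}(E^{\boxtimes n},E^{\boxtimes n})$ twisted by the sign character; this gives $\PP_{E^{-[n]}}(E^{[n]})=E^{[n]}$.

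For $\TT_E^{\;[n]}$ and $\TT_E^{\;-[n]}$ the starting point is Proposition~\ref{prop:induction}: on underlying objects the induced functor is nothing but $\FM_{\kp^{\boxtimes n}}=\TT_E^{\times n}$, so it sends $E^{\boxtimes n}$ to $\TT_E(E)^{\boxtimes n}$. As $E$ is spherical on a surface, $\TT_E(E)=E[1-2]=E[-1]$, and hence the underlying object of the image is $(E[-1])^{\boxtimes n}=E^{\boxtimes n}[-n]$. This already explains the uniform shift by $-n$ in all four entries of the bottom two rows, and it reduces the lemma to identifying the $\Sn$-linearisation produced by the formula $\beta=\pi_{2*}(\mu\otimes\pi_1^*\alpha)$ of Proposition~\ref{prop:induction} with one of the two standard linearisations on $E^{\boxtimes n}[-n]$.

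This last identification is the step I expect to be the main obstacle. The two diagonal linearisations $\mu$ on $\kp^{\boxtimes n}$ differ by the sign character, as do the two input linearisations $\alpha$ on $E^{\boxtimes n}$, and the tensor formula for $\beta$ makes their characters multiply. The subtle contribution is the Koszul sign: rewriting the naturally occurring object $(E[-1])^{\boxtimes n}$ as $E^{\boxtimes n}[-n]$ permutes $n$ factors each placed in an odd shift, so comparing the two descriptions twists a permutation $\sigma\in\Sn$ by $\operatorname{sgn}(\sigma)$; concretely, the flip of $E[-1]\boxtimes E[-1]$ and the flip of $(E\boxtimes E)[-2]$ differ by $-1$. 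I would therefore carry out the $n=2$ case explicitly, comparing $\pm$ the Koszul flip of $E[-1]\boxtimes E[-1]$ against the constant and the sign linearisation on $E^{\boxtimes 2}[-2]$, and then verify that the kernel character, the input character and this Koszul sign combine to the linearisation recorded in the table; the general $n$ case follows because all three contributions are sign characters on $\Sn$. Getting these three signs to match is exactly the delicate bookkeeping at the heart of the argument.
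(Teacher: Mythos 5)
Your treatment of the first two rows is correct and is exactly what the paper does (compressed there to ``general statements for $\IP^n$-twists''): the diagonal entries come from $\PPP_F(F)=F[-2n]$, the off-diagonal ones from $\PPP_F|_{F\orth}=\id$ together with Corollary~\ref{orthogonal} and its mirror image, proved by the same anti-invariance computation. (One caveat, affecting you and the paper equally: for $n=2$ that orthogonality actually fails, since the anti-invariant part of $\End^*(E)^{\otimes 2}$ is spanned by $1\otimes h-h\otimes 1\neq 0$; the vanishing of the sign-isotypic part of $\End^*(E)^{\otimes n}$ needs $n\geq 3$.)

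For the last two rows your plan coincides with the paper's proof up to the point you yourself flag as delicate, and there the two diverge: the paper simply decrees that the pushforward inherits the product of the kernel sign and the input sign, with no Koszul contribution, whereas you want kernel sign times input sign times Koszul sign. But neither rule can ``combine to the linearisation recorded in the table'', because the table is internally inconsistent. The formula of Proposition~\ref{prop:induction} is multiplicative in the two linearisations separately: replacing $\mu$ by $\mu\cdot\mathrm{sgn}$, or $\alpha$ by $\alpha\cdot\mathrm{sgn}$, multiplies the induced linearisation $\pi_{2*}(\mu\otimes\pi_1^*\alpha)$ by $\mathrm{sgn}$. Hence, independently of any sign convention,
\[ \TT_E^{\;-[n]}(E^{-[n]})\;\cong\;\TT_E^{\;[n]}(E^{[n]}), \qquad \TT_E^{\;-[n]}(E^{[n]})\;\cong\;\TT_E^{\;[n]}(E^{-[n]}), \]
so the fourth row must be the third row with $E^{[n]}$ and $E^{-[n]}$ interchanged. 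The printed table, in which both rows fix both objects up to $[-n]$, would force $E^{[n]}[-n]\cong E^{-[n]}[-n]$, contradicting the first lemma of the section. The correct statement is that exactly one of $\TT_E^{\;[n]}$, $\TT_E^{\;-[n]}$ fixes $E^{[n]}$ and $E^{-[n]}$ up to $[-n]$ while the other interchanges them up to $[-n]$; which one does which is decided precisely by the Koszul sign you isolated (with it, $\TT_E^{\;[n]}$ is the one that swaps; under the paper's sign-free rule it is the one that fixes --- note that the paper's own rule already contradicts its printed fourth row). So if you carry out your three-sign bookkeeping honestly, you will refute two entries of the table rather than verify them. This is not fatal for Theorem~\ref{thm:hilb}: with the corrected table the two functors are still distinct (more visibly so, since they now differ on $E^{[n]}$) and still neither spherical twists, $\IP^n$-twists nor standard autoequivalences, but the lemma as stated --- and hence your proposed verification of it --- cannot stand.
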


\begin{proof}
The first two lines are general statements for $\IP^n$-twists.

For the other ones, we use that the induced functors
$\TT_E^{\;\pm[n]}$ are computed like ordinary FM transforms, linearising
the end result in the way explained in Section \ref{sub:Hilbert}. In particular, all four objects
 $\TT_E^{\;\pm[n]}(E^{\pm[n]})\in\D^\Sn(X^n)$
have the same underlying object $E^{\boxtimes n}[-n]\in\D(X^n)$. Indeed, if we ignore linearisations $\TT_E^{\;\pm[n]}$ is just $\TT_E^{\;\times n}$; since $\TT_E(E)=E[-1]$ we get
  $\TT_E^{\;\times n}(E^{\boxtimes n}) =
   (E[-1])^{\boxtimes n} = E^{\boxtimes n}[-n]$.

Since the latter has only two different linearisations, we find that
$\TT_E^{\;[n]}$ and $\TT_E^{\;-[n]}$ will (up to shift) permute the set
$\{E^{[n]},E^{\pm[n]}\}$.

The diagonal linearisation on $\kp^{\pm[n]}\otimes\pi_1^*E^{-[n]}$ is
given by the (tensor) product of the two compound linearisations. The
pushforward
\[\pi_{2*}(\kp^{\pm[n]}\otimes\pi_1^*E^{\pm[n]})\cong\TT_E^{\pm[n]}(E^{\pm[n]})\]
then inherits the linearisation of that sign.
\end{proof}

\begin{remark}
Note that $\TT_E^2=\PP_E$. Since $\deltan$ is a group homomorphism, we can immediately compute the values of $\PP_E^{\;\pm[n]}$ on the above mentioned four objects.
\end{remark}

\begin{thm}\label{thm:hilb}
The autoequivalences $\TT_E^{\;[n]}$ and $\TT_E^{\;-[n]}$ are exotic and distinct.
\end{thm}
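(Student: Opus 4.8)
The plan is to prove the two assertions separately. For distinctness, note first that Lemma~\ref{lem:value-equiv} is of no help, since the two functors agree on $E^{[n]}$ and $E^{-[n]}$; instead I would evaluate them on $F^{[n]}$ for a simple object $F\in E\orth$ (such $F$ exist, e.g.\ a skyscraper or a spherical object orthogonal to $E$, as in the proof of Theorem~\ref{thm:enriques}). Since $\TT_E(F)\cong F$ \emph{without any shift}, the underlying transform $\TT_E^{\times n}$ fixes $F^{\boxtimes n}$, so by Proposition~\ref{prop:induction} the object $\TT_E^{\pm[n]}(F^{[n]})$ is $F^{\boxtimes n}$ carrying the linearisation obtained by multiplying the kernel linearisation with the trivial linearisation of $F^{[n]}$. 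This gives $\TT_E^{[n]}(F^{[n]})\cong F^{[n]}$ but $\TT_E^{-[n]}(F^{[n]})\cong F^{-[n]}$. As $F$ is simple, its two linearisations are non-isomorphic (Section~\ref{sub:linearisations}), so $F^{[n]}\not\cong F^{-[n]}$ and the functors are distinct. The reason this step is clean is that the absence of a shift on $F$ means no Koszul signs intervene and only the kernel's sign character is detected. Equivalently, the two functors differ by the sign-character twist, which under the McKay equivalence is the line bundle twist by $\ko_{\Xn}(D)$ (the Remark after Corollary~\ref{orthogonal}), nontrivial because $\Pic(\Xn)$ is torsion free.

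For the exotic claim I would first exclude standard autoequivalences. By Lemma~\ref{lem:value-equiv}, $\TT_E^{\pm[n]}$ sends $E^{[n]}$ to $E^{[n]}[-n]$, a nontrivial shift for $n\geq2$, while the object $A:=F^{[n]}\oplus F^{-[n]}$ is fixed by both functors (for $\TT_E^{-[n]}$ the sign character merely swaps the two summands). A standard autoequivalence $(-\otimes L)\circ f_*\circ[k]$ maps any object that it sends to a shift of itself by the one global shift $k$, since $f_*$ and $\otimes L$ preserve cohomological degrees. Evaluating on $E^{[n]}$ forces $k=-n$ and evaluating on $A$ forces $k=0$; as $n\geq2$ this is a contradiction, so neither functor is standard.

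It remains to show that $\TT_E^{[n]}$ and $\TT_E^{-[n]}$ are not themselves spherical twists or $\IP^n$-twists, which is the step I expect to be the real obstacle. The two obvious candidates $\PP_{E^{[n]}}$ and $\PP_{E^{-[n]}}$ are ruled out immediately by Lemma~\ref{lem:value-equiv}: each of them fixes one of $E^{[n]},E^{-[n]}$, whereas $\TT_E^{\pm[n]}$ shifts both by $[-n]$. For an arbitrary twist I would use that a spherical twist $\TT_S$ (resp.\ an $\IP^n$-twist $\PP_V$) is the identity on $S\orth$ (resp.\ $V\orth$) and moves only the shifts of its defining object, by $[1-2n]$ (resp.\ $[-2n]$). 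Since $\TT_E^{[n]}$ moves $E^{[n]}$ by $[-n]$ and $-n\notin\{0,1-2n,-2n\}$ for $n\geq2$, the object $E^{[n]}$ could be neither orthogonal to nor a shift of the defining object, a contradiction; alternatively, $E^{[n]}$ and $E^{-[n]}$ are orthogonal (Corollary~\ref{orthogonal}) yet both moved, so they cannot both be shifts of a single defining object without violating $\Hom^\bullet(E^{[n]},E^{-[n]})=0$. The delicate point is justifying that a single twist moves only the shifts of its defining object; I would make this precise on the Mukai lattice, where such a twist fixes the orthogonal complement of one vector and therefore cannot move an orthogonal pair. This does not settle membership in the group generated by all such twists, which the paper explicitly leaves open.
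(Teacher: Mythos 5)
Your overall route coincides with the paper's: distinctness, exclusion of standard autoequivalences, and exclusion of spherical and $\IP^n$-twists are all obtained by evaluating the functors on $E^{\pm[n]}$ (Lemma~\ref{lem:value-equiv}) and on objects manufactured from some $F\in E\orth$. In two places you are in fact more careful than the paper. For distinctness the paper argues via the kernels (they carry the two different linearisations), whereas your evaluation $\TT_E^{\;[n]}(F^{[n]})\cong F^{[n]}$ versus $\TT_E^{\;-[n]}(F^{[n]})\cong F^{-[n]}\not\cong F^{[n]}$ is a concrete certificate that the functors differ; and your object $F^{[n]}\oplus F^{-[n]}$ correctly repairs a point the paper glosses over, namely that $\TT_E^{\;-[n]}$ does not fix $F^{[n]}$ itself but exchanges the two linearisations. (Two non-load-bearing slips: a skyscraper sheaf is never in $E\orth$ when $E$ is a spherical vector bundle, e.g.\ $E=\ko_X$, so you need a simple orthogonal $F$, which exists for the known examples exactly as the paper assumes; and identifying the sign-character twist with the line bundle twist by $\ko_{\Xn}(D)$ under the derived McKay correspondence is unjustified, since that equivalence does not respect tensor products --- the paper's remark only computes the images of the two objects $\ko_X^{\pm[n]}$.)

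The genuine gap is exactly where you flagged it: the principle that a twist ``moves only the shifts of its defining object'', and your proposed Mukai-lattice justification does not work. First, a reflection fixing the hyperplane $v\orth$ can perfectly well move both members of an orthogonal pair of vectors, so orthogonality of $E^{[n]}$ and $E^{-[n]}$ yields no lattice-level contradiction. Second, and worse, for $n$ even the shift $[-n]$ acts as $+1$ on K-theory, so the lattice cannot even detect that $E^{[n]}$ is moved. (The principle is also false categorically as you stated it: on a K3 surface, $\TT_{\ko_X}$ moves every skyscraper $\ko_x$, which is not a shift of $\ko_X$.) What you actually need --- and what the paper's ``follows immediately'' silently uses --- is: if $\TT_S(A)\cong A[m]$ with $A\neq0$ on the $2n$-dimensional $\Xn$, then $m\in\{0,\,1-2n\}$. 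Proof: since $\TT_S$ is an equivalence with $\TT_S(S)\cong S[1-2n]$, one gets $\Hom^\bullet(S,A)\cong\Hom^\bullet(\TT_S S,\TT_S A)\cong\Hom^\bullet(S,A)$ with the grading shifted by $m+2n-1$; a nonzero bounded graded vector space admits no such isomorphism unless the shift is trivial, so either $m=1-2n$, or $\Hom^\bullet(S,A)=0$, in which case $A\in S\orth$, $\TT_S(A)\cong A$, and $m=0$. Since $\TT_E^{\;\pm[n]}(E^{[n]})\cong E^{[n]}[-n]$ and $-n\notin\{0,\,1-2n\}$ for $n\geq2$, no spherical $S$ can realise these functors; the identical regrading argument with $\PP_V(V)\cong V[-2n]$ excludes $\IP^n$-twists because $-n\notin\{0,\,-2n\}$. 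With this lemma substituted for the lattice argument, your proof is complete and is the fleshed-out version of the paper's.
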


\begin{proof}
The functors are distinct since the kernel of $\TT_E^{\;[n]}$ is the tensor product with itself of the kernel of $\TT_E^{\;-[n]}$. It follows immediately from Lemma \ref{lem:value-equiv} that these functors can neither be spherical nor $\IP^n$-twists. Taking an object $F \in E^\bot$ (which exists for all known spherical objects $E$) and arguing as in the lemma, we see that $\TT_E^{\;[n]}$ and $\TT_E^{\;-[n]}$ shift some objects and leave others invariant, hence they are not contained in the group of standard equivalences. 
\end{proof}

\begin{remark}
Again, it follows from Lemma~\ref{lem:value-equiv} that the four autoequivalences $\PP_{E^{\pm[n]}}$, $\PP_E^{\pm[n]}$ are distinct. However, it is not yet clear that $\PP_E^{\pm[n]}$ are not $\IP^n$-twists associated to other $\IP^n$-objects. This seems unlikely, especially for the $\IP^n$-object $\ko_{X^{[n]}}$.

Furthermore, the autoequivalences $\TT_E^{\;[n]} \TT_E^{\;-[n]}$ and $\TT_E^{\;-[n]} \TT_E^{\;[n]}$ are potentially also exotic.
\end{remark}

\end{document}